\def\del{\partial}
\def\Ci{C^\infty}
\def\dv{d_{\vertical}}
\def\hCi{\widehat{C}^\infty}
\def\Li{L_\infty}
\def\nr{\mathbb R}
\def\nz{\mathbb Z}
\DeclareMathOperator{\Hom}{Hom}
\DeclareMathOperator{\vertical}{vert}
\providecommand{\abs}[1]{\lvert#1\rvert}
\newtheorem{theorem}{Theorem}[section]
\newtheorem{lemma}[theorem]{Lemma}
\newtheorem{proposition}[theorem]{Proposition}
\newtheorem{corollary}[theorem]{Corollary}
\theoremstyle{definition}
\newtheorem{definition}[theorem]{Definition}
\newtheorem{example}[theorem]{Example}
\newtheorem{examples}[theorem]{Examples}
\theoremstyle{remark}
\newtheorem*{remark}{Remark}
\newcommand{\tocless}[2]{\bgroup\let\addcontentsline=\nocontentsline#1{#2}
\egroup}
\title{On homotopy Lie bialgebroids}
\author[D. Bashkirov]{Denis Bashkirov}
\email{bashk003@umn.edu}
\address {School of Mathematics\\University of Minnesota\\
  Minneapolis, MN 55455, USA}
\author[A. A. Voronov]{Alexander A. Voronov} \email{voronov@umn.edu}
\address {School of Mathematics\\University of Minnesota\\
  Minneapolis, MN 55455, USA, and Kavli IPMU (WPI), UTIAS, University
  of Tokyo, Kashiwa, Chiba 277-8583, Japan}
\date{August 16, 2017}
\begin{document}

\begin{abstract}
  Lie algebroids appear throughout geometry and mathematical physics
  implementing the idea of a sheaf of Lie algebras acting
  infinitesimally on a smooth manifold.  A well-known result of
  A.~Vaintrob characterizes Lie algebroids and their morphisms in
  terms of homological vector fields on supermanifolds, which might be
  regarded as objects of derived geometry. This leads naturally to the
  notions of an $\Li$-algebroid and an $\Li$-morphism. The situation
  with Lie bialgebroids and their morphisms is more complicated, as
  they combine covariant and contravariant features. We approach Lie
  bialgebroids in the way of odd symplectic dg-manifolds, building on
  D.~Roytenberg's thesis. We extend Lie bialgebroids to the homotopy
  Lie case and introduce the notions of an $\Li$-bialgebroid and an
  $\Li$-morphism. The case of $\Li$-bialgebroids over a point
  coincides with the O.~Kravchenko's notion of an $\Li$-bialgebra, for
  which the notion of an $\Li$-morphism seems to be new.
\end{abstract}

\maketitle

\tableofcontents

\section{Introduction}

The notion of a Lie bialgebra was introduced in the seminal works of
V.~Drinfeld \cite{Dr1,Dr2} on algebraic aspects of the quantum inverse
scattering method. A Lie bialgebra $\mathfrak{g}$ is a Lie algebra
equipped with a one-cocycle $\delta:\mathfrak{g}\to \mathfrak{g}\wedge
\mathfrak{g}$ (a \textit{cobracket}), whose dual $\delta^*$ yields a
Lie bracket on $\mathfrak{g}$.  As a quintessential example, Lie
bialgebras appear as infinitesimal counterparts of Poisson-Lie
groups. Geometrization of this notion leads to the concept of a Lie
bialgebroid that natually arises in the Poisson-geometric context. In
particular, there is a canonical Lie bialgebroid associated to any
Poisson manifold. The aim of this note is to introduce an extension of
this concept to the case of graded manifolds and homotopy Lie
structures.

We survey the basic definitions, motivating examples and results
concerning Lie algebroids, $\Li$-algebroids, and Lie bialgebroids in
sections 2 and 3.  In section 4 we review the Hamiltonian approach to
Lie (bi)algebroids of D.~Roytenberg and then give a Hamiltonian
characterization of Lie (bi)algebroid morphisms
(Theorem~\ref{Lie-bialg-mor}). In the final section, we introduce the
notions of an $L_\infty$-bialgebroid and an $L_\infty$-morphism of
$L_\infty$-bialgebroids and list some relevant examples.

\subsection*{Conventions}

The ground field is $\nr$ by default. The dual $V^*$ of a graded
vector bundle $V$ is understood as the direct sum of the duals of its
graded components, graded in such a way that the natural pairing $V^*
\otimes V \to \nr$ is grading-preserving.  In particular, $(V[n])^* =
V^*[-n]$. By default, the degree on a bigraded vector bundle, such as
$S(V)$, stands for the total degree.  Differentials are assumed to
have degree 1.

We use the exterior algebra $\wedge^\bullet V$ and the symmetric
algebra $S(V[-1])$ interchangeably. The former is used mostly for
ungraded vector spaces $V$, whereas the latter is reserved for graded
ones.

We assume vector bundles to have finite rank and graded vector bundles
to have locally finite rank. Likewise, all graded manifolds will be
assumed to have finited-dimensional graded components. We will work
with smooth \emph{graded manifolds}, which we will understand as
locally ringed spaces $(M, C^\infty_{V}) := (M, S(\mathcal{V}^*))$,
where $V \to M$ is a graded vector bundle over a manifold $M$ and
$S(\mathcal{V}^*)$ is the graded symmetric algebra over $C^\infty_M$
on the graded dual to the sheaf $\mathcal{V}$ of sections.

A \emph{morphism $V \to W$ of graded manifolds} is a morphism of
locally ringed spaces $(M, S(\mathcal{V}^*)) \to (N,
S(\mathcal{W}^*))$. will assume that a \emph{differential},
\emph{i.e}., a degree-one, $\nr$-linear derivation $d$ of the
structure sheaf satisfying $d^2 = 0$ is given on a graded
manifold. \emph{Morphisms of dg-manifolds} will have to respect
differentials. Since we work in the $\Ci$ category, we will routinely
substitute sheaves with spaces of their global sections.

A graded manifold $V$ over $M$ comes with a morphism $V \to M$ given
by the inclusion of $\Ci_M$ into the function sheaf of $V$. There is
also a \emph{relative basepoint} given by the zero section of the
vector bundle $V \to M$, which induces a morphism $M \to V$ of graded
manifolds over $M$. A \emph{based morphism} must respect zero sections
of the structure vector bundles. The structure differential will also
be assumed to be \emph{based}, that is to say, the zero section $M \to
V$ must be a dg-morphism.

We will follow a common trend and confuse the notation $V$ for a
vector bundle $V$ and the sheaf $\mathcal{V}$ of its sections, when it
is clear what we mean from the context.

 
For a manifold $M$ with a Poisson tensor $\pi$, we will use $\pi^\#$
to denote the natural morphism $T^*M\to TM$ determined by the
condition
\[
 \langle \alpha\wedge\beta, \pi \rangle = \langle \beta, 
\pi^\#(\alpha)\rangle,\quad \alpha,\beta\in T^*M.
\]

\section{Lie algebroids}

\subsection{Basic definitions and examples}
\begin{definition}
 A \textit{Lie algebroid} structure on a vector bundle $V\to M$ over a 
smooth manifold $M$ consists of
 \begin{itemize}
  \item 
  an $\nr$-bilinear Lie bracket $[,]:\Gamma(V)\otimes_\nr \Gamma(V)\to
  \Gamma(V)$ on the space of sections;
  \item
  a morphism of vector bundles $\rho: V\to TM$, called the \textit{anchor map},
 \end{itemize}
subject to the Leibniz rule
  $$[X,fY]=f[X,Y]+(\rho(X)(f))Y, \quad X,Y\in \Gamma(V),\,f\in
 C^{\infty}(M).$$
It follows, in particular, that the anchor map is a morphism of Lie
algebras:
\begin{equation}
\label{anchor-Lie}
   \rho([X,Y])=[\rho(X),\rho(Y)], \quad X,Y\in \Gamma(V).
\end{equation}

\end{definition}
\begin{examples}
\label{basic-examples}
\noindent
\newline
 \begin{enumerate}
  \item 
Any Lie algebra can be regarded as a Lie algebroid over a point.
  \item
\label{tangent}
The tangent bundle $TM$ taken with the standard Lie bracket of vector
fields and $\rho=id:TM\to TM$ is trivially a Lie algebroid.
  \item
\label{2}
More generally, any integrable distribution $V\subset TM$ is a Lie
algebroid with $\rho: V\to TM$ being the inclusion. Thus, a regular foliation
on a manifold gives rise to a Lie algebroid.
\item A family of Lie algebras over a manifold $M$, \emph{i.e}., a
  vector bundle with a Lie bracket bilinear over functions on $M$, is
  a Lie algebroid with a zero anchor.
 \item
 \label{3}
 Let $\mathfrak{g}$ be a Lie algebra acting on a manifold $M$ via an
 infinitesimal action map $\mathfrak{g}\to \Gamma(TM)$, $X \mapsto
 \xi_X$. Then $\mathfrak{g}\times M\to M$ is a Lie algebroid with
 $(X,m)\mapsto \xi_X (m)$ as the anchor and the bracket defined
 pointwise by
\[
 [X,Y](m) := [X(m),Y(m)]_{\mathfrak{g}} + \xi_X Y (m)-\xi_YX (m),
\]
where a section $X$ of $\mathfrak{g} \times M \to M$ is identified
with a function $X: M \to \mathfrak{g}$.

\item
Every Lie groupoid gives rise to a Lie algebroid, see Example
\ref{bi-examples}\eqref{groupoid}. For example the tangent Lie
algebroid of Example~\eqref{tangent} above comes from the \emph{pair
  groupoid} of a manifold $M$. The manifold of objects of the pair
groupoid is $M$ itself, the manifold of morphisms is $M \times M$,
with morphism composition given by
\[
(x,y) \circ (y,z) := (x,z), \qquad x, y, z \in M.
\]

 \item
\label{5}
If $M$ is a Poisson manifold with the Poisson bivector $\pi\in
\Gamma(\wedge^2 TM)$, then the canonical morphism $\pi^\#:T^*M\to TM$
together with the Koszul bracket
\[
 \{\alpha,\beta\}_\pi= L_{\pi^\#(\alpha)}(\beta)- L_{\pi^\#(\beta)}
(\alpha)-d(\iota_\pi(\alpha\wedge \beta))
\]
determines a Lie algebroid structure on $T^*M$.
 \item
\label{6}
Given a vector bundle $V\to M$, the space of \textit{derivative
  endomorphisms} $Der(V)$ is defined as the space of all linear
endormorphisms $D:\Gamma(V)\to \Gamma(V)$ such that there exists
$D_M\in TM$, and $$D(fX)=fD(X)+D_M(f)X$$ for any $X\in \Gamma(V)$,
$f\in C^\infty(M)$.  Then $Der(V)$ equipped with the standard
commutator bracket and the mapping $\rho: D\mapsto D_M$ as the anchor
is a Lie algebroid.
 
\item For a principal $G$-bundle $P$ over a manifold $M$, the quotient
  $TP/G$ of $TP$ by the induced action of $G$ is known as the
  \textit{Atiyah Lie algebroid} of $P$. The bracket and the anchor map
  are naturally inherited from $TP$.
 \end{enumerate}
 \end{examples}
The notion of a morphism of Lie algebroids $V\to W$ defined over the
same base manifold $M$ is rather straightforward: it is a vector
bundle morphism $\phi: V\to W$ such that
$\phi([X,Y])=[\phi(X),\phi(Y)]$ and $\rho_W\circ \phi=\rho_V$.  In
general, the definition of such a morphism in terms of brackets and
anchor maps is more involved due to the fact that a morphism of vector
bundles defined over different bases does not induce a morphism of
sections.
 
To introduce relevant notation, let $\phi:V\to W$ be a morphism of
vector bundles $V\to M$, $W\to N$ over $f:M\to N$ and $\phi_!:V\to
f^*W$ be a canonical morphism arising from the universal property of
the pullback $f^*W$. This induces a mapping of sections
$$\Gamma(V)\to \Gamma(f^*W)\simeq
C^\infty(M)\otimes_{C^\infty(N)}\Gamma(W)$$ that we, by a slight abuse
of notation, will keep denoting by $\phi_!$.
\begin{definition}
\label{Lie-morphism}
Then a \textit{morphism $V \to W$ of Lie algebroids} is a morphism
\[
\begin{CD}
V @>\phi>> W\\
@VVV @VVV\\
M @>>> N
\end{CD}
\]
of vector bundles subject to the following conditions:
  \begin{itemize}
   \item 
   $\rho_W\circ \phi=df\circ \rho_V$;
 \item for $X,Y\in \Gamma(V)$, write
   $\phi_!(X)=\sum\limits_{i}f_i\otimes X'_i$,
   $\phi_!(Y)=\sum\limits_{j}g_j\otimes Y'_j$; then
   \[
    \phi_!([X,Y])=\sum\limits_{i,j}f_i g_i \otimes
        [X'_i,Y'_j]+(\rho_V(X)(g_j))\otimes
        Y'_j-(\rho_V(Y)(f_i))\otimes X'_i.
   \]
  \end{itemize}
\end{definition} 

It is an exercise to check that the second condition is independent of
the tensor-product expansions.

 \begin{examples}
 \noindent
 \newline
 \begin{enumerate}
 \item
For any smooth map $f:M\to N$, the tangent map $df: TM\to TN$ is a
morphism of tangent Lie algebroids as defined in
Example \ref{basic-examples}\eqref{2}.
\item
Given a Lie algebroid $V \to M$, its anchor map $TM \to V$ is a
morphism of Lie algebroids.
\item
In Example \ref{basic-examples}\eqref{6} above, Lie algebroid
morphisms $TM\to Der(V)$ right-inverse to the anchor map
$\rho:Der(V)\to TM$ correspond to flat connection on $V$.
 \end{enumerate}
 \end{examples}
 
 \subsection{The dg-manifold approach}

 Given a Lie algebroid $V\to M$, the coboundary operator 
$d:\Gamma(\wedge^{k} V^*)\to\Gamma(\wedge^{k+1} V^*)$
 defined by 
 \begin{align*}
   d\varphi(X_1,\dots,X_{k+1})&=\sum\limits_{i=1}^{k+1}(-1)^{i+1}\rho(X_i)\varphi(X_1,\dots,\hat{X_i},\dots,X_{k+1})
   \\  
                              &+\sum\limits_{i<j}(-1)^{i+j}\varphi([X_i,X_j],X_1,\dots,\hat{X_i},\dots,\hat{X_j},\dots,X_{k+1})\tag{1}
 \end{align*}
 turns $\Gamma(\wedge^\bullet V^*)$ into a differential graded algebra.
 \begin{examples}
 \noindent
 \newline
 \begin{enumerate}
 \item For a Lie algebra, this yields the standard cohomological
   Che\-val\-ley-Ei\-len\-berg complex with trivial coefficients.
 \item If $V=TM$ is a tangent Lie algebroid, then
   $\Omega^\bullet (M) := \Gamma(\wedge^\bullet T^*M) $ is the
   standard de Rham complex of a smooth manifold $M$.
\item For the Lie algebroid associated with a Poisson manifold $M$ as
  in Example \ref{basic-examples}\eqref{5}, $\Gamma(\wedge^\bullet
  TM)$ is the cohomological Poisson complex. The differential
  $d_\pi=[\pi,-]$ in this case is known as the \textit{Lichnerowicz}
  differential. The standard mapping $\pi^\#:T^*M\to TM$ induces a
  morphism between the de Rham cohomology $H^\bullet_{dR}(M)$ and the
  cohomology of $(\Gamma(\wedge^{\bullet} TM),d_{\pi})$, which turns
  out to be an isomorphism in the symplectic case.
 \end{enumerate}
 \end{examples}
 \begin{remark}
  In fact, $\Gamma(\wedge^\bullet V^*)$ admits a slightly richer structure.
  Namely, the contraction $i_X:\Gamma(\wedge^{k+1} V^*)\to\Gamma(\wedge^{k} V^*)$ and the Lie derivative $L_X$ defined 
  by
  \[
   (L_X\varphi)(Y_1,\dots,Y_k)=\rho(X)(\varphi(Y_1,\dots,Y_k))-\sum\limits_{i=1}^{k}\varphi(Y_1,\dots,[X,Y_i],\dots,Y_k)
  \]
  satisfy all the standard rules of Cartan calculus.
 \end{remark}

Passage from Lie algebroids to the ``Koszul dual'' picture encoded by
the corresponding dg-algebras simplifies the matters concerning Lie
algebroid morphisms. This is due to the following
 \begin{theorem}[A.~Vaintrob \cite{vaintrob}]
 \label{Vaintrob}
   Let $V\to M$ be a vector bundle. Then the structures of
\begin{enumerate}
\item a Lie algebroid on $V$,
\item a dg-manifold, $d^V: C^\infty (V[1]) \to C^\infty(V[1])$, on the
  graded manifold $V[1]$
\end{enumerate}
are equivalent.
Furthermore, there are natural bijections between
the following sets:
\begin{enumerate}
\item The set of morphisms of Lie algebroids $V\to M$ and $W\to N$;
\item The set of dg-manifold morphisms $(V[1], d^V) \to (W[1], d^W)$.
\end{enumerate}
 \end{theorem}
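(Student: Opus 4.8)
The plan is to work entirely through the identification of the function algebra of the graded manifold $V[1]$ with the exterior algebra of sections,
\[
C^\infty(V[1]) \;\cong\; \Gamma(\wedge^\bullet V^*),
\]
which is generated, as a graded-commutative $\nr$-algebra, by $C^\infty(M)$ in degree $0$ and $\Gamma(V^*)$ in degree $1$. Since a derivation is determined by its action on generators, a homological vector field --- a degree-one, $\nr$-linear derivation $Q$ with $Q^2=0$ --- amounts to a pair of $\nr$-linear maps $Q_0\colon C^\infty(M)\to\Gamma(V^*)$ and $Q_1\colon\Gamma(V^*)\to\Gamma(\wedge^2V^*)$ subject to the Leibniz rule over $C^\infty(M)$ together with the closedness condition $Q^2=0$.

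For the equivalence of structures, one direction is essentially formula~(1): a Lie algebroid produces the derivation $d$ whose restrictions $Q_0,Q_1$ are read off from $(1)$, and the excerpt already records that $d$ is a differential. Conversely, given $Q$ I would \emph{define} the anchor and bracket by inverting these formulas: set $\langle Q_0 f, X\rangle =: \rho(X)(f)$ for $f\in C^\infty(M)$, $X\in\Gamma(V)$, so that the Leibniz property of $Q$ forces $\rho$ to be a bundle map $V\to TM$; and for $\xi\in\Gamma(V^*)$ define $[X,Y]$ through
\[
\langle Q_1\xi,\,X\wedge Y\rangle \;=\; \rho(X)\langle\xi,Y\rangle-\rho(Y)\langle\xi,X\rangle-\langle\xi,[X,Y]\rangle,
\]
the $C^\infty(M)$-linearity defect of $Q_1$ reproducing the Leibniz rule for the bracket. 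Finally I would split the single equation $Q^2=0$ by target degree: its component on $C^\infty(M)$ is exactly the anchor compatibility~\eqref{anchor-Lie}, and its component on $\Gamma(V^*)$ is the Jacobi identity. This yields mutually inverse assignments.

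For morphisms, a morphism of graded manifolds $V[1]\to W[1]$ is the same as a degree-zero algebra morphism $\phi^*\colon\Gamma(\wedge^\bullet W^*)\to\Gamma(\wedge^\bullet V^*)$, and degree reasons pin down its shape: on $C^\infty(N)$ it is some $f^*$, hence comes from a smooth map $f\colon M\to N$, while on $\Gamma(W^*)$ it is $C^\infty(N)$-linear (through $f^*$) and therefore dual to a vector bundle map $\phi\colon V\to W$ over $f$ --- precisely the data packaged by $\phi_!$. Imposing the dg-condition $\phi^*\circ d^W=d^V\circ\phi^*$ and testing it on the two families of generators then yields exactly the two conditions of Definition~\ref{Lie-morphism}: the component on $C^\infty(N)$ gives $\rho_W\circ\phi=df\circ\rho_V$, and the component on $\Gamma(W^*)$ gives the bracket condition.

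The main obstacle is this last verification over distinct bases $M\neq N$. Unwinding $\phi^*d^W\eta=d^V\phi^*\eta$ for $\eta\in\Gamma(W^*)$ requires expanding $\phi_!$ through a tensor decomposition $\phi_!(X)=\sum_i f_i\otimes X_i'$ and carefully tracking how $d^V$ acts across the pullback $f^*$; the derivative terms $\rho_V(X)(g_j)$ appearing in Definition~\ref{Lie-morphism} arise precisely from the Leibniz rule applied to these products. By contrast, the anchor compatibility and the degreewise splitting of $Q^2=0$ are routine bookkeeping, and the independence of the chosen tensor expansion (the exercise following Definition~\ref{Lie-morphism}) is automatic here, since the dg-condition is phrased intrinsically in terms of $\phi^*$. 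Throughout, all identities may be checked in a local frame of $V$ and $W$, which keeps the computations finite.
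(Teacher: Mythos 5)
Your proposal is correct and follows essentially the same route as the paper's own proof sketch: a degree-one derivation of $C^\infty(V[1])$ is determined by its restrictions to $C^\infty(M)$ and to the generators $\Gamma(V^*)$, whose dualizations give the anchor and the bracket, with $(d^V)^2=0$ encoding the anchor compatibility \eqref{anchor-Lie} and the Jacobi identity, and the derivation property encoding the Leibniz rule --- the only cosmetic difference being that the paper extracts the anchor via the universal property of K\"ahler differentials while you define it by a direct pairing. Your treatment of the morphism bijection goes beyond the paper, which leaves that half of the theorem unproven; your outline of it (degree reasons pinning the algebra morphism to a bundle map over $f\colon M\to N$, then testing the dg-condition on the two families of generators to recover the two clauses of Definition~\ref{Lie-morphism}) is sound.
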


In the context of graded manifolds the differential $d^V$
is commonly referred to as a \textit{homological vector field} on
$V[1]$. The complex $S(V^*[-1]), d^V)$ is often called the
\emph{cohomological Chevalley-Eilenberg complex} of the Lie algebroid.

\begin{proof}[Idea of proof]
  A derivation $d^V: C^\infty(V[1]) \to C^\infty(V[1])$ of degree one
  is determined by its restriction to the subalgebra $C^\infty(M)$:
\begin{equation}
\label{derivation}
C^\infty(M) \to \Gamma(V^*[-1]),
\end{equation}
which must be a derivation of the algebra $C^\infty(M)$ with values in
a $C^\infty(M)$-module, and by the restriction to the module of
generators:
\begin{equation}
\label{dual-bracket}
 \Gamma(V^*[-1]) \to  \Gamma(S^2(V^*[-1])).
\end{equation}
By the universal property of K\"ahler differentials,
\eqref{derivation} is equivalent to a $C^\infty(M)$-module morphism
\[
\Omega^1(M) \to \Gamma(V^*[-1]),
\]
whose dualization gives an anchor. The $\nr$-dual of
\eqref{dual-bracket} gives a bracket. The differential property
$(d^V)^2 = 0$ then translates into the Jacobi identity for the bracket
and the Lie algebra morphism property for the anchor. The Lebniz rule
for $d^V$ translates into the Leibniz rule for the bracket.
\end{proof}

\begin{example}
\label{derham}
The tangent algebroid $TM \to M$ of a (graded) manifold $M$
corresponds to the graded manifold $T[1]M$, a shifted tangent bundle,
whose dg-algebra of smooth functions is the de Rham algebra
$(\Omega^\bullet (M), d_{dR})$.
\end{example}

\subsection{The graded case and $\Li$-algebroids}

Treating the notion of a Lie algebroid via dg-manifolds leads
naturally to its graded version known as an $\Li$-algebroid.  The
concept of an $\Li$-algebroid was conceived in the works of
H.~Khudaverdian and Th.~Th. Voronov \cite{ThVor2}, H.~Sati,
U.~Schreiber and J.~Stasheff \cite{sati-schreiber-stasheff2009},
A.~J. Bruce \cite{bruce2010}, and Th.~Th. Voronov \cite{fedya2010}.
At the same time, $\Li$-algebras disguised as formal dg-manifolds or
$\nr^{0|1}$-action have been known to V.~Drinfeld, V.~A. Hinich,
M.~Kontsevich, D.~Quillen, V.~Schechtman, D.~Sullivan, and likely some
others, since the last three decades of the 20th century; see
\cite{kontsevich:feynman} and \cite{stasheff:1609.08401} and
references therein. $\Li$-algebroids have made their way to physics,
as one of the most general models of quantum field theory, the AKSZ
model \cite{AKSZ}; it is a sigma model with an odd symplectic
dg-manifold as the target space.

One may also think of an $\Li$-algebroid as a sheaf of $\Li$-algebras
acting infinitesimally on a smooth manifold, see a remark after
Definition~\ref{Li-algebroid}. This notion is essentially
K.~Costello's notion of an $\Li$ space \cite{costello}, see
\cite{grady-gwilliam} and another remark after
Definition~\ref{Li-algebroid} for details.

\subsection*{Formal graded and differential graded manifolds}

From now on we will be focusing on \emph{formal graded manifolds} $(M,
\hCi_V) := (M, S(\mathcal{V})^*)$, where $M$ is a graded maniold, $V$
a graded vector bundle over $M$ and the sheaf of functions
$S(\mathcal{V})^*)$ is the graded dual of the symmetric coalgebra with
respect to the shuffle coproduct. One can think of the algebra
$S(\mathcal{V})^*$ as an algebraic version of completion of the
algebra $S(\mathcal{V}^*)$. Any graded manifold $M$ may be regarded as
a formal manifold over itself associated with the zero vector bundle
over $M$.

A \emph{morphism $V \to W$ of formal graded manifolds} is a morphism
of locally ringed spaces $(M, S(\mathcal{V})^*) \to (N,
S(\mathcal{W})^*)$ induced by a vector bundle morphism
\[
\begin{CD}
S(V) @>>> W\\
@VVV @VVV\\
M @>>> N
\end{CD}
\]
This requirement may be understood as a continuity condition with
respect to our algebraic completions. A \emph{formal dg-manifold} is a
formal graded manifold endowed with a \emph{differential},
\emph{i.e}., a degree-one, $\nr$-linear derivation $d$ of the
structure sheaf satisfying $d^2 = 0$. 

\emph{Morphisms of $($formal$)$ dg-manifolds} will have to respect
differentials. Since we work in the $\Ci$ category, we will routinely
substitute sheaves with spaces of their global sections.

A graded manifold $V$ over $M$ of any of the above flavors comes with
a morphism $V \to M$ given by the inclusion of $\Ci_M$ into the
function sheaf of $V$. There is also a \emph{relative basepoint} given
by the zero section of the vector bundle $V \to M$, which induces a
morphism $M \to V$ of (formal) graded manifolds over $M$. A
\emph{based morphism} must respect zero sections of the structure
vector bundles. The structure differential in the dg case will also be
assumed to be \emph{based}, that is to say, the zero section $M \to V$
must be a dg-morphism.

Let $V\to M$ be a graded vector bundle over a (possibly, graded)
manifold $M$. We will think of its total space as a \emph{pointed
  formal graded manifold $V[1]$, fibered over} $M$.  
The term ``pointed'' is understood in a fiberwise (relative) sense and
refers to the fact that the fiber bundle $V[1] \to M$ has a canonical
zero section, given by a standard augmentation
$ \hCi (V[1]) \to \Ci (M)$. A \emph{dg structure} on the pointed
formal graded manifold $V[1]$ over $M$ is a choice of a square-zero,
degree-one derivation $d$ of the $\nr$-algebra $\hCi (V[1])$ such that
the zero section $\hCi (V[1]) \to \Ci (M)$ respects the differentials.
Here the differential on $\Ci (M)$ is assumed to be zero. 

\begin{definition}
\label{Li-algebroid}
\noindent
\newline
\begin{enumerate}
\item An \emph{$L_\infty$-algebroid} is a graded vector bundle
  $V\to M$ with the structure of a dg-manifold on the pointed formal
  graded manifold $V[1]$ over $M$.  That is, it is the structure of a
  dg-algebra over $\Ci(M)$ on the graded commutative algebra
  $\Gamma (M, S(V[1])^*)$ such that the differential is compatible
  with the augmentation $\Gamma (M, S(V[1])^*) \to \Ci(M)$.
\item An \emph{$\Li$-morphism} of $L_\infty$-algebroids $V\to M$ and
  $W\to N$ is a formal pointed dg-manifold morphism
  $(V[1], d^V) \to (W[1], d^W)$.  Equivalently, it is an augmented
  dg-algebra morphism
  $\Gamma (N, S(W[1])^*) \to \Gamma (M, S(V[1])^*)$ over a graded
  algebra morphism $\Ci(N) \to \Ci(M)$.
  \end{enumerate}
 \end{definition}

\begin{example}
  When $M$ is a point, an $\Li$-algebroid is nothing but an
  $\Li$-algebra, and the notion of an $\Li$-morphism of Lie algebroids
  over a point reproduces the standard notion of an $\Li$-morphism of
  $\Li$-algebras.
\end{example}

\begin{remark}
  There is a generalized \emph{$\Li$-anchor map} associated to an
  $\Li$-algebroid $V \to M$. Indeed, the composition of its structure
  differential with the unit map
  $C^\infty (M) \to \Gamma (M, S(V[1])^*)$ gives a degree-one
  derivation with values in $\Gamma(M, \linebreak[0] S(V[1])^*)$. This
  gives rise to a $C^\infty(M)$-module morphism
  $\Omega^1(M) \to \Gamma(M, S(V[1])^*)$ by the universal property of
  K\"ahler differentials. It extends uniquely to a dg-algebra morphism
  $\Omega^\bullet(M) \to \Gamma(M, S(V[1])^*)$.  This, in its turn,
  induces a morphism of formal pointed dg-manifolds $V[1] \to T[1]M$,
  or an $\Li$-morphism of $\Li$-algebroids, generalizing the
  anchor. One can also think of it as an $\Li$-action of the
  $\Li$-algebroid $V$ on the base graded manfiold $M$.
\end{remark}


\subsection{The Poisson manifold approach}
The data of a Lie algebroid on $V\to M$ can also be cast in the form
of a Poisson structure on the linear dual bundle $V^*\to M$
generalizing the well-known Kostant-Kirillov (also known as the
Lie-Poisson) bracket defined on the linear dual of a Lie algebra.
More specifically, identifying smooth functions on $V^*$ constant
along the fibers with functions on $M$ and identifying functions on
$V^*$ linear along the fibers with sections of $V$, we set
 \[
  \{f,g\}_{V^*}=\begin{cases}
                 [f,g],\quad &f,g\in \Gamma(V)\\
                 \rho(f)g,\quad &f\in\Gamma(V), g\in C^{\infty}(M)\\
                 0,\quad &f,g\in C^\infty(M)
                \end{cases}
 \]
Extending this bracket further to the polynomial and smooth functions
via the Leibniz rule and completion endows $V^*$ with a well-defined
Poisson structure. Note that the corresponding Poisson tensor will be
linear along the fibers of $V^* \to M$.

\begin{theorem}[T.~J. Courant \cite{courant:Dirac}]
\label{courant}
Let $V \to M$ be a vector bundle. Then the following structures are
equivalent:
 \begin{enumerate}
  \item A Lie algebroid structure on $V \to M$;
  \item A Poisson structure on the total space of the vector bundle
    $V^* \to M$ such that the Poisson structure is linear along the
    fibers.
 \end{enumerate}
\end{theorem}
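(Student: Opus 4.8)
The plan is to use the identification, set up just before the statement, of the fiberwise-polynomial functions on the total space of $V^*\to M$ with the symmetric algebra $\Gamma(S^\bullet V)$ over $\Ci(M)$: functions constant along the fibers form the degree-zero piece $\Ci(M)$, functions linear along the fibers form the degree-one piece $\Gamma(V)$, and $\Gamma(S^k V)$ sits in fiber-degree $k$. This algebra is generated over $\nr$ by $\Ci(M)$ together with $\Gamma(V)$. First I would make the hypothesis ``linear along the fibers'' precise: using the fiber-scaling (Euler) vector field, one checks it is equivalent to the Poisson bracket being homogeneous of degree $-1$ for the fiber-polynomial grading, so that the bracket of functions of fiber-degrees $p$ and $q$ has fiber-degree $p+q-1$. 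On generators this leaves exactly the three cases $\{\Ci(M),\Ci(M)\}=0$, $\{\Gamma(V),\Ci(M)\}\subseteq\Ci(M)$, and $\{\Gamma(V),\Gamma(V)\}\subseteq\Gamma(V)$ displayed above, which is what allows the two kinds of data to be matched.

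For $(1)\Rightarrow(2)$ I would start from a Lie algebroid $([\,,\,],\rho)$, define $\{\,,\,\}$ on generators by the three displayed formulas, and extend it as a biderivation to $\Gamma(S^\bullet V)$ and then, by locality and completion, to all smooth functions. Since $\Gamma(V)$ is a $\Ci(M)$-module rather than $\nr$-free, this extension is consistent precisely when two identities hold: $\rho(fX)=f\rho(X)$, so that $\{fX,g\}$ is unambiguous, and $[X,fY]=f[X,Y]+(\rho(X)f)Y$, so that $\{X,fY\}$ is unambiguous---these are exactly the anchor being a bundle map and the Leibniz rule, both part of the given structure. Antisymmetry is immediate on generators. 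For the Jacobi identity the key point is that the Jacobiator $(f,g,h)\mapsto\{f,\{g,h\}\}+\{g,\{h,f\}\}+\{h,\{f,g\}\}$ is a triderivation, so it suffices to test it on triples of generators; sorting by the number of section-arguments, the cases with at most one section vanish by the degree count, two sections with one base function reproduce the morphism property \eqref{anchor-Lie}, and three sections reproduce the Jacobi identity for $[\,,\,]$.

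For the converse $(2)\Rightarrow(1)$ I would read the same dictionary backwards. The degree-$(-1)$ shape above lets me set $[X,Y]:=\{X,Y\}\in\Gamma(V)$ and define $\rho(X)$ as the vector field $\{X,-\}$ on $\Ci(M)$. The biderivation property then yields $\rho(fX)=f\rho(X)$ from $\{fX,g\}=f\{X,g\}+\{f,g\}X=f\{X,g\}$, and the Leibniz rule from $\{X,fY\}=(\rho(X)f)Y+f[X,Y]$; antisymmetry and Jacobi of the Poisson bracket on three sections give antisymmetry and Jacobi for $[\,,\,]$, and Jacobi on two sections and one base function gives $\rho([X,Y])=[\rho(X),\rho(Y)]$. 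Because both constructions are pinned down by the same formulas on generators, they are mutually inverse, which gives the asserted bijection.

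The step I expect to need the most care is not any individual identity but the interface between the fiberwise-polynomial algebra $\Gamma(S^\bullet V)$, on which the biderivation and triderivation arguments are clean, and the genuine algebra of smooth functions on $V^*$ obtained by completion. One must confirm that the bracket is local and continuous, so that being determined on the dense generating subalgebra determines it everywhere and that Jacobi, verified on generators, propagates to all smooth functions. Making the fiber-linearity hypothesis rigorous through the Euler vector field and tracking the degree-$(-1)$ homogeneity across this completion is the technical heart of the argument.
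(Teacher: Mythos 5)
Your proposal is correct and follows essentially the same route the paper takes: the paper defines the bracket on generators (the three displayed cases) and extends it by the Leibniz rule and completion, citing Courant for the full equivalence, and your argument fleshes out exactly this construction, adding the well-definedness checks over the $C^\infty(M)$-module structure, the Jacobiator-as-triderivation reduction to generators, and the inverse dictionary for $(2)\Rightarrow(1)$. The only point the paper handles differently is that it treats fiber-linearity of the Poisson tensor as immediate from the construction rather than characterizing it via the Euler vector field, but that is a matter of exposition, not substance.
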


Two Poisson structures arising in this fashion on the linear duals of
Lie algebroids $V\to M$, $W\to N$ can be related by means of Lie
algebroid \textit{comorphisms} rather than morphisms. Namely, a Lie
algebroid comorphism from $V$ to $W$ over $f: M\to N$ is a
(base-preserving) morphism of vector bundles $\phi: f^*W \to V$ over
$M$ such that
$$\phi^\#([X,Y])=[\phi^\#(X), \phi^\#(Y)]$$
for all $X,Y\in \Gamma(W)$ and
$df\circ \rho_V \circ \phi^\# = \rho_W$.
Here, $\phi^\#$ is the natural composition
$\Gamma(W)\overset{f^*}{\to} \Gamma(f^*W)\overset{\phi_!}{\to}
\Gamma(V)$. Such a comorphism yields a vector bundle morphism
\[
 V^*\to (f^*W)^* \xrightarrow{\sim} f^*W^* \to W^*.
\]

\begin{theorem}[P.~Higgins, K.~Mackenzie \cite{HigMac}]
\label{Hig-Mac}
 Lie algebroid comorphisms from $V\to M$ to $W\to N$ are in one-to-one
 corresponence with vector bundle morphisms $V^*\to W^*$ that are
 Poisson.
\end{theorem}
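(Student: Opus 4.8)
The plan is to run everything through Courant's theorem (Theorem~\ref{courant}), which turns the two Lie algebroids into the fiberwise-linear Poisson manifolds $V^*\to M$ and $W^*\to N$, and then to match the two defining conditions of a comorphism against the single requirement that the dual vector bundle morphism $\Phi\colon V^*\to W^*$ be a Poisson map, testing that requirement only on generators of the function algebras.

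First I would record the underlying linear-algebraic bijection. A vector bundle morphism $\Phi\colon V^*\to W^*$ automatically covers a smooth map $f\colon M\to N$, and fiberwise dualization identifies it with a base-preserving morphism $\phi\colon f^*W\to V$ over $M$; this is exactly the passage $V^*\to (f^*W)^*\xrightarrow{\sim} f^*W^*\to W^*$ displayed before the statement, read backwards. Consequently vector bundle morphisms $V^*\to W^*$ are already in bijection with the vector-bundle part of the data of a comorphism, and the entire content of the theorem lies in matching the Poisson property of $\Phi$ with the bracket and anchor identities.

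Next I would compute $\Phi^*$ on the two kinds of generators. The fiberwise-constant functions on $W^*$ are $\Ci(N)$ and the fiberwise-linear ones are $\Gamma(W)$, and likewise for $V^*$; by the construction recalled before Theorem~\ref{courant} the bracket on generators is $\{X,Y\}_{W^*}=[X,Y]_W$, $\{X,h\}_{W^*}=\rho_W(X)(h)$, and $\{h_1,h_2\}_{W^*}=0$. Since $\Phi$ is a vector bundle morphism, $\Phi^*$ sends fiberwise-constant functions to fiberwise-constant ones via $f^*$ and fiberwise-linear functions to fiberwise-linear ones; unwinding the fiberwise pairing shows that the latter map is precisely $\phi^\#$, that is, $\Phi^*X=\phi^\#(X)$ for $X\in\Gamma(W)$. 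With these identifications the Poisson condition $\Phi^*\{a,b\}_{W^*}=\{\Phi^*a,\Phi^*b\}_{V^*}$ splits, on generators, into three cases. For $X,Y\in\Gamma(W)$ it becomes $\phi^\#([X,Y]_W)=[\phi^\#X,\phi^\#Y]_V$, the bracket condition; for $X\in\Gamma(W)$ and $h\in\Ci(N)$ it becomes $f^*(\rho_W(X)(h))=\rho_V(\phi^\#X)(f^*h)$, which is exactly the $f$-relatedness of $\rho_V(\phi^\#X)$ with $\rho_W(X)$, i.e. the anchor condition $df\circ\rho_V\circ\phi^\#=\rho_W$; and for $h_1,h_2\in\Ci(N)$ it becomes $0=\{f^*h_1,f^*h_2\}_{V^*}$, which holds for free.

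The hard part will be upgrading this generator-wise agreement to the full Poisson condition on all smooth functions. The device I would use is to set $D(a,b):=\Phi^*\{a,b\}_{W^*}-\{\Phi^*a,\Phi^*b\}_{V^*}$ and check that in each argument separately $D$ is a derivation over the algebra homomorphism $\Phi^*$ — this uses only that $\Phi^*$ is multiplicative and that both brackets obey the Leibniz rule — so that $D$ is a biderivation and vanishes identically once it vanishes on a generating set. The point needing genuine care is that the Poisson bracket is defined on all of $\Ci(V^*)$ rather than only on fiberwise polynomials, so one must also appeal to the locality of biderivations and to the density of the fiberwise-polynomial functions in the completion used to define $\{,\}_{V^*}$. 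Running the argument in both directions then yields the desired bijection between comorphisms and Poisson vector bundle morphisms.
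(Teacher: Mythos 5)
The paper offers no proof of Theorem~\ref{Hig-Mac} at all: it is quoted directly from Higgins--Mackenzie \cite{HigMac}, so there is no internal argument to compare yours against, and your proposal must be judged on its own merits. On those merits it is correct, and it is the natural argument. The dictionary between the underlying data (vector bundle morphisms $V^*\to W^*$ covering $f$ versus base-preserving morphisms $\phi\colon f^*W\to V$, which uses the finite-rank hypothesis the paper imposes on all vector bundles), the computation $\Phi^* X=\phi^\#(X)$ on fiberwise-linear functions and $\Phi^* h=f^* h$ on fiberwise-constant ones, and the three-way split of the Poisson condition on generators into the bracket condition, the anchor condition (via the fact that smooth functions separate tangent vectors), and a trivially satisfied identity are all right, and since every step is an equivalence the argument yields the bijection in both directions at once. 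The one step you should tighten is the last one: ``density'' is not really the mechanism by which vanishing of $D$ on generators propagates to all of $C^\infty(W^*)$. The cleaner statement is that $D$, being a derivation over the homomorphism $\Phi^*$ in each slot, is pointwise determined by differentials: by Hadamard's lemma (locality of derivations over a homomorphism, i.e., such a derivation is a vector field along $\Phi$), the value $D(a,b)(v)$ depends only on $d_{\Phi(v)}a$ and $d_{\Phi(v)}b$. Since the differentials of fiberwise-affine functions --- pullbacks of functions on $N$ together with sections of $W$ viewed as linear functions --- already span the cotangent space of $W^*$ at every point, vanishing of $D$ on generators forces $D\equiv 0$, with no completion or continuity argument needed. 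With that wording fixed, your proof is complete.
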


\subsection{Lie coalgebroids}
\label{Lie-coalg}

Keeping in mind our main objective of studying Lie bialgebroids and
their homotopy generalization, we briefly describe the notion of a Lie
coalgebroid.

\begin{definition}
 A \textit{Lie coalgebroid} structure on a vector bundle $V\to M$ over a 
smooth manifold $M$ consists of
 \begin{itemize}
  \item 
  an $\nr$-linear mapping $\delta: \Gamma(V)\to \Gamma(V)
  \wedge_\nr \Gamma(V)$ (a \textit{Lie cobracket})
  satisfying the co-Jacobi identity
  \[
   \circlearrowright(\delta \otimes id)\delta = 0,
  \]
    where 
    $$\circlearrowright(x\otimes y\otimes z)=x\otimes y\otimes z+y\otimes z\otimes x+z\otimes x\otimes y;$$
  \item
  a vector-bundle morphism $\sigma: T^*M \to V$, called the
  \textit{coanchor}, subject to the co-Leibniz rule
 \end{itemize}
\[
\delta (fX)= f\delta(X) + \sigma(df) \wedge X, \quad X\in
\Gamma(V),\,f\in C^{\infty}(M).
\]
\end{definition}
This implies, in particular, that
\[
\delta(\sigma(\omega)) = (\sigma \wedge \sigma) (d \omega), \quad
\omega \in\Gamma(T^*M).
\]
That is, $\sigma$ induces a morphism $\Gamma(T^*M)\to \Gamma(V)$ of
Lie coalgebras.

\begin{theorem}
\label{coalgebroid}
Let $V\to M$ be a vector bundle. Then the structures of
\begin{enumerate}
\item a Lie coalgebroid on $V$,
\item a dg-manifold on the graded manifold $V^*[1]$
\end{enumerate}
are equivalent.
\end{theorem}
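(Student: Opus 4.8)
The plan is to run the argument behind Theorem~\ref{Vaintrob} in reverse, now on the graded manifold $V^*[1]$. First I would pin down the structure algebra: since $(V^*[1])^* = V[-1]$, the functions are $\Ci(V^*[1]) = \Gamma(\wedge^\bullet V)$, a graded-commutative algebra whose degree-zero part is $\Ci(M)$ and which is generated over $\Ci(M)$ by the degree-one piece $\Gamma(V)$. Thus a dg-structure on $V^*[1]$ is exactly a square-zero, degree-one derivation $d$ of $\Gamma(\wedge^\bullet V)$, and the whole theorem becomes a matter of unpacking such a $d$ into the cobracket and coanchor.

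Next I would analyze an arbitrary degree-one derivation $d$. Being a derivation, it is determined by its values on generators, i.e. by the two $\nr$-linear maps $d|_{\Ci(M)}\colon \Ci(M) \to \Gamma(V)$ and $d|_{\Gamma(V)}\colon \Gamma(V) \to \Gamma(\wedge^2 V)$. The first is a derivation of $\Ci(M)$ valued in the module $\Gamma(V)$, so by the universal property of K\"ahler differentials it factors as $d|_{\Ci(M)} = \sigma \circ d_{dR}$ for a unique $\Ci(M)$-module map $\Omega^1(M) \to \Gamma(V)$, that is, a bundle morphism $\sigma\colon T^*M \to V$ — the coanchor. The second map I would simply christen the cobracket $\delta$. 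The Leibniz rule for $d$ on a product $fX$ with $f\in\Ci(M)$, $X\in\Gamma(V)$ reads
\[
d(fX) = \sigma(d_{dR}f)\wedge X + f\,\delta(X),
\]
which is precisely the co-Leibniz rule; conversely, co-Leibniz is exactly the consistency needed for a pair $(\sigma,\delta)$ to extend to a well-defined derivation. This yields a bijection between degree-one derivations of $\Gamma(\wedge^\bullet V)$ and pairs $(\sigma,\delta)$ satisfying co-Leibniz.

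Then I would impose $d^2 = 0$. Since $d^2$ is again a derivation (of degree two), it vanishes iff it vanishes on the two families of generators. On $\Gamma(V)$, extending $d$ over a decomposable expression for $\delta X \in \Gamma(\wedge^2 V)$ produces, after collecting Koszul signs, the cyclic co-Jacobiator $\circlearrowright(\delta\otimes\id)\delta$, so $d^2|_{\Gamma(V)} = 0$ is the co-Jacobi identity. On $\Ci(M)$ one computes $d^2 f = d(\sigma(d_{dR}f)) = \delta(\sigma(d_{dR}f))$, so $d^2|_{\Ci(M)}=0$ is the condition $\delta\circ\sigma\circ d_{dR} = 0$; granting co-Leibniz this is equivalent to the coanchor compatibility $\delta\circ\sigma = (\sigma\wedge\sigma)\circ d_{dR}$, the statement that $\sigma$ is a morphism of Lie coalgebras. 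This last condition is the exact dual of the automatic identity~\eqref{anchor-Lie} for Lie algebroids and is in fact a consequence of co-Jacobi together with co-Leibniz, matching the ``implies in particular'' remark following the definition. Assembling the pieces shows $d^2 = 0$ holds iff $(\sigma,\delta)$ is a Lie coalgebroid, which is the desired equivalence.

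I expect the only genuine bookkeeping obstacle to be the co-Jacobi computation on $\Gamma(V)$: one must carefully track the signs generated when the degree-one derivation $d$ is pushed past the degree-one factors of $\delta X$, and check that the outcome reproduces $\circlearrowright(\delta\otimes\id)\delta$ on the nose rather than merely up to sign. A secondary subtlety is the interplay between the $\nr$-linearity of $\delta$ and the $\Ci(M)$-bilinearity of $\wedge$, which is what makes the dualization of Theorem~\ref{Vaintrob} run backward here. One may sidestep much of this by observing that a Lie coalgebroid structure on $V$ is the same datum as a Lie algebroid structure on $V^*$ (with anchor $\rho = \sigma^*$ and bracket dual to $\delta$), whose associated graded manifold $V^*[1]$ is precisely the one above, and then quoting Theorem~\ref{Vaintrob} directly; but making that identification precise still requires the derivation analysis just described, so I would present the direct proof as the primary route.
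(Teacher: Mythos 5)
Your proposal is correct and takes essentially the same route as the paper, which gives no detailed argument but states only that the proof is ``a rather straightforward exercise on the definitions,'' similar to the proof of Theorem~\ref{Vaintrob}: your analysis of a degree-one derivation of $\Gamma(\wedge^\bullet V) = \Ci(V^*[1])$ on generators --- K\"ahler differentials producing the coanchor from $d|_{\Ci(M)}$, the restriction to $\Gamma(V)$ giving the cobracket, the Leibniz rule giving co-Leibniz, and $d^2=0$ giving co-Jacobi together with the coanchor compatibility (which, as the paper notes after the definition, is automatic) --- is exactly that exercise carried out. Your observation that, in contrast with the algebroid case, no dualization is needed at either step is precisely why the coalgebroid version is the more immediate of the two.
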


The proof of this theorem is a rather straightforward exercise on the
definitions; it is similar to the proof of Theorem~\ref{Vaintrob}.

\begin{examples}
\label{coalg-ex}
\noindent
\newline
\begin{enumerate}
 \item
\label{cotan-coalg} The cotangent bundle $T^*M\to M$ is
   trivially a Lie coalgebroid with the cobracket being the
   restriction of the de Rham differential $d_{dR}$ onto $T^*M$ and
   the coanchor $\sigma=id_{T^*M}$.
\item
\label{dual-coalg}
Any Lie algebroid structure on $V \to M$ gives rise to a Lie
coalgebroid structure on the linear dual bundle $V^*\to M$
\cite[Section 1.4.14]{beilinson-drinfeld}.\footnote{We remind the
  reader that we assume all vector bundles to be of finite rank.}  In
particular, taking $V$ to be the standard tangent Lie algebroid $TM\to
M$ recovers the previous example. Coversely, a Lie coalgebroid
structure on $V\to M$ induces a Lie algebroid structure on $V^* \to
M$.
\end{enumerate}
\end{examples}

The last example combined with the Poisson bracket construction
outlined in the previous section implies, in particular, that the
structure of a Lie coalgebroid on $V\to M$ induces a fiberwise linear
Poisson structure on $V$. More concretely, identifying, as before,
fiberwise linear functions on $V$ with the sections of $V^*$ and
fiberwise constant functions on $V$ with the elements of
$C^\infty(M)$, we get
\[
 \{\alpha, \beta\}_V =
 \begin{cases}
  (\alpha\otimes\beta)\delta,\quad &\alpha,\beta\in \Gamma(V^*)\\
  \alpha(\sigma(d\beta)),\quad &\alpha\in\Gamma(V^*),\beta\in
  C^{\infty}(M)\\
  0,\quad &\alpha,\beta\in C^{\infty}(M)
 \end{cases}.
\]
This is to be extended further via the Leibniz rule and completion. As an upshot, it enables us to give a 
concise definition of a Lie coalgebroid morphism.
\begin{definition}
\label{mor-coalg}
A \emph{morphism $V \to W$ of Lie coalgebroids} is a vector bundle
morphism $(V\to M)\to (W\to N)$ such that the map of total spaces $V
\to W$ is Poisson.
\end{definition}


\begin{example}
The coanchor map $\sigma: T^*M \to V$ of a Lie coalgebroid $V\to M$ is
a Lie coalgebroid morphism, where $T^*M$ is given the standard
cotangent Lie coalgebroid structure as in
Example \ref{coalg-ex}\eqref{cotan-coalg}.  In this case, the Poisson
structure on $T^*M$ corresponding to the Lie coalgebroid structure is
the standard Poisson structure on the cotangent bundle.
\end{example}

\subsection{The odd Poisson manifold approach}
\label{odd-Poisson}

Yet another algebraic structure naturally associated with a Lie
algebroid $V\to M$ is defined on $\Gamma(\wedge^{\bullet} V)$.
Namely, there is a canonical extension of the Lie bracket on
$\Gamma(V)$ to a bracket
 \[
  [,]:\Gamma(\wedge^{k} V)\otimes\Gamma(\wedge^{l} V)\to
  \Gamma(\wedge^{k+l-1} V), \quad k,l\geq 1
 \]
of degree $-1$ satisfying the graded Jacobi and Leibniz rules.  For
$X\in\Gamma(V),f\in C^{\infty}(M)$, we set
 \[
  [X,f]=\rho(X)(f).
 \]
Altogether, this turns $\Gamma(\wedge^{\bullet} V)$ into a
Gerstenhaber (or an \textit{odd Poisson}) algebra. The converse is
also true:
 \begin{theorem}[A.~Vaintrob \cite{vaintrob}]
\label{Vaintrob-Poisson}
Let $V\to M$ be a vector bundle. The following structures are
equivalent:
 \begin{enumerate}
  \item
  A Lie algebroid structure on $V\to M$;
\item A Gerstenhaber algebra structure on $\Gamma(\wedge^{\bullet} V)$
  (taken with the standard multiplication);
\item A graded Poisson structure of degree $-1$ on $V^*[1]$.
 \end{enumerate}
 \end{theorem}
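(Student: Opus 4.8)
The plan is to prove the cycle $(1)\Leftrightarrow(2)\Leftrightarrow(3)$, treating $(2)\Leftrightarrow(3)$ as a change of language and concentrating the genuine work on $(1)\Leftrightarrow(2)$. First I would record the identification of graded commutative algebras
\[
\Ci(V^*[1]) = \Gamma(M, S((V^*[1])^*)) = \Gamma(M, S(V[-1])) \cong \Gamma(\wedge^\bullet V),
\]
where the last isomorphism is the one fixed in our conventions, matching the exterior degree with the total degree. Under this identification, a graded Poisson structure of degree $-1$ on the graded manifold $V^*[1]$ is literally the same datum as a degree $-1$ graded Lie bracket on $\Gamma(\wedge^\bullet V)$ that is a biderivation for the standard wedge multiplication, i.e.\ a Gerstenhaber bracket; this gives $(2)\Leftrightarrow(3)$ with no further computation. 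I would also observe that, since the fiberwise-linear functions on $V^*[1]$ (namely $\Gamma(V)$) sit in degree $1$ and the fiberwise-constant functions (namely $\Ci(M)$) in degree $0$, a degree $-1$ bracket automatically carries $\{\Gamma(V),\Gamma(V)\}$ into $\Gamma(V)$, carries $\{\Gamma(V),\Ci(M)\}$ into $\Ci(M)$, and kills $\{\Ci(M),\Ci(M)\}$ on degree grounds. Thus the bracket is automatically linear along the fibers and is completely determined by its restriction to the generators $\Gamma(V)$ and $\Ci(M)$.

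For $(1)\Rightarrow(2)$ I would define the bracket on generators by taking $[X,Y]$ to be the Lie algebroid bracket for $X,Y\in\Gamma(V)$, by $[X,f]=\rho(X)(f)$ for $X\in\Gamma(V)$, $f\in\Ci(M)$, and by $[f,g]=0$ for $f,g\in\Ci(M)$, and then extend to all of $\Gamma(\wedge^\bullet V)$ by forcing the graded Leibniz rule. The first point to check is that this extension is well defined, i.e.\ consistent with the $\Ci(M)$-module relations in $\Gamma(\wedge^\bullet V)$; the content here is exactly the Lie algebroid Leibniz rule $[X,fY]=f[X,Y]+(\rho(X)f)Y$, which is precisely what guarantees that bracketing with a fixed multivector is a well-defined derivation. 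The remaining point is the graded Jacobi identity, which I would verify on generators: the all-degree-$1$ case reduces to the Jacobi identity of the Lie bracket, the case $[[X,Y],f]$ reduces to the anchor compatibility \eqref{anchor-Lie}, and the cases involving two degree-$0$ entries are trivial since $\{\Ci(M),\Ci(M)\}=0$. One then propagates Jacobi to all of $\Gamma(\wedge^\bullet V)$ using that both sides of the Jacobiator are derivations in each slot.

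For $(2)\Rightarrow(1)$ I would simply restrict a given Gerstenhaber bracket to the generators. The degree bookkeeping of the first paragraph shows that $[\,,\,]$ restricts to an $\nr$-bilinear bracket on $\Gamma(V)$ and that $X\mapsto(f\mapsto[X,f])$ defines an $\nr$-linear map $\Gamma(V)\to\Gamma(TM)=\mathrm{Der}(\Ci(M))$, i.e.\ an anchor $\rho$; here the one substantive verification is that $[X,-]$ really acts as a vector field on $\Ci(M)$, which follows from the Leibniz rule of the Gerstenhaber bracket applied with one slot in degree $0$. The Leibniz property over the wedge product then yields the Lie algebroid Leibniz rule, and graded Jacobi restricts to the ordinary Jacobi identity, so that $(V,\rho,[\,,\,])$ becomes a Lie algebroid. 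Since the two constructions are inverse to each other on generators, and both structures are determined by their restriction to generators, they are mutually inverse.

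I expect the main obstacle to be the sign- and module-bookkeeping in $(1)\Rightarrow(2)$: proving that the bracket defined on generators extends consistently over $\Ci(M)$ and that graded Jacobi holds on all of $\Gamma(\wedge^\bullet V)$. Both are standard but require care with the Koszul signs of the shifted grading; the conceptual input is entirely contained in the Lie algebroid axioms, with no new geometric idea needed beyond the generators-plus-Leibniz extension principle.
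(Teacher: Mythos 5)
Your proposal is correct and follows essentially the same route as the paper: the paper establishes $(1)\Rightarrow(2)$ by exactly your generators-plus-Leibniz extension (Section \ref{odd-Poisson}, where $[X,f]=\rho(X)(f)$ and the bracket is extended as a degree $-1$ biderivation), identifies $(2)\Leftrightarrow(3)$ through the standing convention $\wedge^\bullet V = S(V[-1]) $, i.e.\ $\Gamma(\wedge^\bullet V)\cong \Ci(V^*[1])$, and cites Vaintrob for the converse. Your $(2)\Rightarrow(1)$ by restriction to the generators $\Ci(M)$ and $\Gamma(V)$ is the same determined-by-generators argument the paper uses in its proof idea for Theorem \ref{Vaintrob}, so nothing in your outline diverges from the paper's treatment.
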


 This theorem can be regarded as an odd analogue of Courant's Theorem
 \ref{courant}. Likewise, the following statement is an odd analogue
 of Higgins-Mackenzie's Theorem \ref{Hig-Mac}.

\begin{proposition}
  There are natural bijections between the following sets:
 \begin{enumerate}
\item The set of Lie algebroid comorphisms from $V\to M$ to $W\to N$;
\item The set of Gerstenhaber algebra morphisms
  $\Gamma(\wedge^{\bullet} W)\to\Gamma(\wedge^{\bullet} V)$;
\item The set of graded Poisson manifold morphisms
  $V^*[1] \to W^*[1]$.
 \end{enumerate}
 \end{proposition}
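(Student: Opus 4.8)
The plan is to treat the equivalences $(2)\Leftrightarrow(3)$ and $(1)\Leftrightarrow(2)$ separately, leaning on Theorem~\ref{Vaintrob-Poisson}. The passage $(2)\Leftrightarrow(3)$ is essentially the anti-equivalence between graded manifolds and their function algebras. Indeed, under the conventions $(V^*[1])^* = V[-1]$ and $S(V[-1]) = \wedge^\bullet V$, the graded-commutative algebra of functions on $V^*[1]$ is precisely $\Gamma(\wedge^\bullet V)$, and by Theorem~\ref{Vaintrob-Poisson} its degree $-1$ Gerstenhaber bracket is exactly the graded Poisson bracket attached to the degree $-1$ Poisson structure on $V^*[1]$. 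A morphism of graded Poisson manifolds $V^*[1]\to W^*[1]$ is by definition a morphism of graded manifolds whose pullback on functions is Poisson, that is, a Gerstenhaber algebra morphism $\Gamma(\wedge^\bullet W)\to\Gamma(\wedge^\bullet V)$; so the two sets coincide tautologically once the structures are matched.

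The substance of the proposition is $(1)\Leftrightarrow(2)$, an odd counterpart of Higgins--Mackenzie's Theorem~\ref{Hig-Mac}, and here is how I would argue it. First I would repackage a comorphism $\phi\colon f^*W\to V$ over $f\colon M\to N$ as the pair $(f,\phi^\#)$, where $\phi^\#\colon\Gamma(W)\to\Gamma(V)$ is $f^*$-semilinear, meaning $\phi^\#(gX)=f^*(g)\,\phi^\#(X)$ for $g\in C^\infty(N)$; this semilinearity is immediate from the definition $\phi^\#=\phi_!\circ f^*$, and by Serre--Swan the pair $(f,\phi^\#)$ recovers $\phi$, so no data is lost. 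Since $\Gamma(\wedge^\bullet W)$ is the free graded-commutative algebra over $C^\infty(N)$ generated by $\Gamma(W)$, the pair $(f^*,\phi^\#)$ extends uniquely and multiplicatively to an algebra morphism $\Phi\colon\Gamma(\wedge^\bullet W)\to\Gamma(\wedge^\bullet V)$ lying over $f^*\colon C^\infty(N)\to C^\infty(M)$; the $f^*$-semilinearity is exactly what makes this extension well defined.

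Next I would check that $\Phi$ preserves the Gerstenhaber bracket, which by the graded Leibniz rule reduces to checking it on the generators, where the bracket takes only three shapes. The case $[X,Y]$ with $X,Y\in\Gamma(W)$ is preserved precisely because $\phi^\#([X,Y])=[\phi^\#(X),\phi^\#(Y)]$, the first comorphism axiom. The case $[X,g]=\rho_W(X)(g)$ with $g\in C^\infty(N)$ translates into $\rho_V(\phi^\#(X))(g\circ f)=(\rho_W(X)(g))\circ f$, which is the pointwise meaning of the second comorphism axiom $df\circ\rho_V\circ\phi^\#=\rho_W$, i.e., that $\rho_V(\phi^\#(X))$ is $f$-related to $\rho_W(X)$. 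The remaining case $[g,h]=0$ is trivial. Thus $\Phi$ is a Gerstenhaber morphism, and conversely any Gerstenhaber morphism, being grading-preserving, restricts to $C^\infty(N)\to C^\infty(M)$ and $\Gamma(W)\to\Gamma(V)$, yielding $(f,\phi^\#)$ and hence a comorphism, with the bracket-preservation on generators reproducing the two axioms. A short verification that these two assignments are mutually inverse completes $(1)\Leftrightarrow(2)$.

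I expect the main obstacle to be bookkeeping rather than conceptual: one must argue cleanly that a Gerstenhaber algebra morphism necessarily preserves the exterior grading and is therefore forced to come from restriction to the degree-$0$ and degree-$1$ generators, so that there is no hidden data beyond the pair $(f,\phi^\#)$. Making the $f$-relatedness reading of $df\circ\rho_V\circ\phi^\#=\rho_W$ precise, and confirming that it is genuinely equivalent---not merely implied by---the bracket identity $\Phi([X,g])=[\Phi(X),\Phi(g)]$, is the one place where I would be careful to avoid a one-directional argument.
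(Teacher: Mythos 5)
Your proof is correct. The paper itself offers no proof of this proposition --- it is stated as an immediate ``odd analogue'' of the Higgins--Mackenzie Theorem~\ref{Hig-Mac}, in the same way that Theorem~\ref{Vaintrob-Poisson} is the odd analogue of Courant's Theorem~\ref{courant} --- and your argument supplies exactly the reasoning the paper leaves implicit: $(2)\Leftrightarrow(3)$ is the tautological matching of the degree~$-1$ Poisson structure on $V^*[1]$ with the Gerstenhaber bracket on its function algebra $\Gamma(S(V[-1]))=\Gamma(\wedge^\bullet V)$, and $(1)\Leftrightarrow(2)$ follows by extending $(f^*,\phi^\#)$ multiplicatively and checking the bracket on generators, where the biderivation property reduces everything to the two comorphism axioms. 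As for the point you flag at the end: it is not actually an obstacle, since a Gerstenhaber algebra morphism is by definition a morphism of \emph{graded} algebras, hence preserves the exterior degree, and recovering $f$ from the degree-zero component $C^\infty(N)\to C^\infty(M)$ (Milnor's exercise) and $\phi$ from the degree-one component (Serre--Swan, using $\Gamma(f^*W)\simeq C^\infty(M)\otimes_{C^\infty(N)}\Gamma(W)$) is standard; your two assignments are then visibly mutually inverse because both algebras are generated in degrees $0$ and $1$.
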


 \begin{examples}
 \noindent
 \newline
  \begin{enumerate}
  \item For a Lie algebra $V$, $\Gamma(\wedge^\bullet V)$ is the
    underlying space of the homological Chevalley-Eilenberg complex
    with trivial coefficients. The odd Poisson bracket on
    $\Gamma(\wedge^\bullet V)$ is a derived bracket\cite{KS-derived}
    generated by the homological Chevalley-Eilenberg differential.
  \item
  For a tangent Lie algebroid $TM$, $\Gamma(\wedge^\bullet TM)$ is the 
Schouten-Nijenhuis algebra of multivector fields.
\item For a Lie algebroid associated with a Poisson manifold $M$ as in
  Example \ref{basic-examples}\eqref{5}, $\Gamma(\wedge^\bullet
  T^*M)=\Omega^\bullet(M)$ is the underlying space of the homological
  Poisson complex. The differential in this case (known as the
  \textit{Brylinski} differential) is
  $d=[i_\pi,d_{dR}]$.
 \end{enumerate}
 \end{examples}
 \begin{remark}
   The Poisson manifold $V^*$, the dg-manifold $V[1]$ and the odd
   Poisson manifold $V^*[1]$ determined by a Lie algebroid $V\to M$
   are known as $P$-, $Q$- and $S$-\textit{manifolds}, respectively,
   associated to $V$ \cite{ThVor}. In that regard, Lie bialgebroids
   (see Section~\ref{bi}) manifest themselves in the form of $QP$- or
   $QS$-manifolds, comprising a pair of such structures in a
   compatible way.
 \end{remark}

We also have analogous statements for Lie coalgebroids.

\begin{theorem}
\label{coald-odd-Poisson}
   Let $V\to M$ be a vector bundle. Then the structures of
\begin{enumerate}
\item a Lie coalgebroid on $V$,
\item a graded Poisson structure of degree $-1$ on $V[1]$
\end{enumerate}
are equivalent.
Furthermore, there are natural bijections between
the following sets:
\begin{enumerate}
\item The set of morphisms of Lie coalgebroids $V\to M$ and $W\to N$;
\item The set of graded Poisson morphisms $V[1] \to W[1]$.
\end{enumerate}
\end{theorem}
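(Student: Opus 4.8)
The plan is to deduce the entire statement from the corresponding results for Lie algebroids by invoking the duality of Example~\ref{coalg-ex}\eqref{dual-coalg}, which identifies a Lie coalgebroid structure on $V\to M$ with a Lie algebroid structure on the dual bundle $V^*\to M$. Everything then follows by transporting the earlier theorems---Courant's Theorem~\ref{courant}, Vaintrob's odd-Poisson Theorem~\ref{Vaintrob-Poisson}, and Higgins--Mackenzie's Theorem~\ref{Hig-Mac} together with the Proposition following Theorem~\ref{Vaintrob-Poisson}---through this duality, using the convention $(V[1])^* = V^*[-1]$ and the canonical identification $(V^*)^* \cong V$ for finite-rank bundles.

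For the equivalence of structures, I would argue as follows. A Lie coalgebroid on $V$ is the same datum as a Lie algebroid on $V^*$. Applying Vaintrob's Theorem~\ref{Vaintrob-Poisson} to this Lie algebroid yields a graded Poisson structure of degree $-1$ on $(V^*)^*[1]$, and the identification $(V^*)^* \cong V$ turns this into a degree $-1$ Poisson structure on $V[1]$. Conversely, a degree $-1$ Poisson structure on $V[1]$ is the same as one on $(V^*)^*[1]$, hence a Lie algebroid on $V^*$, hence a Lie coalgebroid on $V$. The only point requiring care is to verify that the odd-Poisson bracket produced by Vaintrob's construction for $V^*$ matches, after the shift, the cobracket $\delta$ and coanchor $\sigma$ of the original coalgebroid; this is a direct unwinding of the defining formulas.

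For the bijection of morphisms, I would chain together the already-available correspondences. By Definition~\ref{mor-coalg}, a Lie coalgebroid morphism $V\to W$ is a vector bundle morphism whose induced map of total spaces is Poisson for the even, fiberwise-linear structures $\{\,,\,\}_V$ and $\{\,,\,\}_W$. The first key identification is that $\{\,,\,\}_V$ is precisely the Kostant--Kirillov (Courant) Poisson structure on $V = (V^*)^*$ associated by Theorem~\ref{courant} to the dual Lie algebroid $V^*$; comparing the case-by-case formulas for $\{\,,\,\}_V$ with the Lie--Poisson bracket makes this transparent. Thus Lie coalgebroid morphisms $V\to W$ are exactly Poisson vector bundle morphisms $(V^*)^* \to (W^*)^*$, which by Higgins--Mackenzie's Theorem~\ref{Hig-Mac} are in bijection with Lie algebroid comorphisms from $V^*$ to $W^*$. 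Finally, the Proposition following Theorem~\ref{Vaintrob-Poisson} puts such comorphisms in bijection with graded Poisson manifold morphisms $(V^*)^*[1] \to (W^*)^*[1]$, \emph{i.e}., with graded Poisson morphisms $V[1]\to W[1]$. Composing these bijections yields the claim, and one checks that all maps of total spaces involved cover the same base map $M\to N$, so the directions are consistent throughout.

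The main obstacle I anticipate is not conceptual but one of bookkeeping: pinning down the signs and shifts so that the two Poisson structures attached to $V$---the even one $\{\,,\,\}_V$ on the total space of $V$ and the odd one of degree $-1$ on $V[1]$---are both correctly recognized as avatars of the single Lie algebroid structure on $V^*$, and verifying that the three dualities compose to the identity on underlying vector-bundle morphisms rather than to some twist. Once the identification of $\{\,,\,\}_V$ with the Lie--Poisson bracket of $V^*$ is in place, the remainder is a formal concatenation of the earlier results.
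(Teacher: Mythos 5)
Your proposal is correct, but it does not coincide with anything in the paper for the simple reason that the paper gives no proof of Theorem~\ref{coald-odd-Poisson} at all: the theorem is stated as the coalgebroid analogue of Theorem~\ref{Vaintrob-Poisson} and of the Proposition on comorphisms, and for the companion statement (Theorem~\ref{coalgebroid}) the authors indicate they have in mind a direct verification similar to the proof of Theorem~\ref{Vaintrob}. Your route is genuinely different: instead of redoing a Vaintrob-type computation for the cobracket $\delta$ and coanchor $\sigma$, you transport the algebroid-side results (Theorems~\ref{courant}, \ref{Vaintrob-Poisson}, \ref{Hig-Mac} and the Proposition following Theorem~\ref{Vaintrob-Poisson}) through the duality of Example \ref{coalg-ex}\eqref{dual-coalg}, which is legitimate under the paper's standing finite-rank hypothesis. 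The checks you flag are exactly the right ones, and they all go through: the bracket $\{\,,\,\}_V$ of Section~\ref{Lie-coalg} agrees case by case with the Kostant--Kirillov bracket that Theorem~\ref{courant} attaches to the Lie algebroid $V^*$ on its dual $(V^*)^* \cong V$, so by Definition~\ref{mor-coalg} coalgebroid morphisms are literally the Poisson bundle maps appearing in Theorem~\ref{Hig-Mac} applied to the algebroids $V^*$ and $W^*$; moreover the composite bijection is untwisted, since Higgins--Mackenzie sends $\phi\colon V \to W$ to its fiberwise transpose comorphism $f^*W^* \to V^*$, and the Proposition converts that comorphism into the Gerstenhaber algebra morphism $\Gamma(\wedge^\bullet W^*) \to \Gamma(\wedge^\bullet V^*)$, which is precisely pullback along the shifted map $\phi[1]\colon V[1] \to W[1]$. (A small point you leave implicit, but which is automatic for degree reasons: any graded Poisson manifold morphism $V[1] \to W[1]$ preserves the grading on functions, hence is induced by a vector bundle morphism $V \to W$, so the two notions of morphism being compared are commensurable.) What your reduction buys is economy and the guarantee that the degree $-1$ Poisson structure on $V[1]$ produced in the first part is the same one used in the morphism bijection; what a direct proof in the style the paper suggests would buy is independence from the identification $(V^*)^* \cong V$ and from the (also unproved in the paper) duality of Example \ref{coalg-ex}\eqref{dual-coalg}.
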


 \subsection{Connections and associated BV algebras}

 This section follows the paper \cite{PingXu} by Ping Xu. For a Lie
 algebroid $V\to M$, endowing the associated Gerstenhaber algebra
 $\Gamma(\wedge^\bullet V)$ with some extra data in the form of a
 differential operator of order one or two, subject to certain
 compatibility conditions, determines an additional structure on $V\to
 M$. The former case will be addressed in Section~\ref{bi}; to handle the
 latter case, we need the following
\begin{definition}
 Let $V\to M$ be a Lie algebroid and $E\to M$ be a vector bundle.
 A linear mapping $$\nabla:\Gamma(V)\otimes\Gamma(E)\to \Gamma(E),\quad 
X\otimes s\mapsto \nabla_X (s)$$ is called a \textit{$V$-connection} 
if
\begin{enumerate}[i.]
 \item 
 $\nabla_{fX}(s)=f\nabla_X(s)$;
 \item
 $\nabla_{X}(fs)=(\rho(X)f)s+f\nabla_X(s)$
\end{enumerate}
for all $f\in C^{\infty}(M), X\in\Gamma(V), s\in\Gamma(E)$.
\end{definition}

The \textit{curvature} of a $V$-connection $\nabla$ on $E\to M$ is an element 
$R\in \Gamma(\wedge^2 V^*)\otimes End(E)$ defined by
\[
 R(X,Y)=\nabla_X\nabla_Y-\nabla_Y\nabla_X-\nabla_{[X,Y]},\quad X,Y\in\Gamma(V),
\]
and the \textit{torsion} is
\[
 T(X,Y)=\nabla_X Y-\nabla_YX-[X,Y]\quad X,Y\in\Gamma(V).
\]
A $V$-connection is said to be \textit{flat} if $R\equiv0$.

If a Lie algebroid $V\to M$ is of rank $n$ as a vector bundle, then any 
$V$-connection of the canonical line bundle $E=\wedge^n V$ determines an 
operator $\Delta:\Gamma(\wedge^\bullet 
V)\to\Gamma(\wedge^{\bullet-1} V)$ on the Gerstenhaber algebra 
$\Gamma(\wedge^\bullet V)$:
\begin{multline*}
  \Delta \omega (X_1, \dots , X_{p+1})
  := \sum_i (-1)^{i-1}
  \nabla_{X_i} \omega (X_1, \dots, \hat{X}_i, \dots, X_{p+1}) \\
  + \sum_{i < j} (-1)^{i+j}\omega([X_i,X_j], \dots, \hat{X}_i, \dots,
  \hat{X}_j, \dots, X_{p+1}),
\end{multline*}
where $\omega \in \Gamma (\wedge^{n-p} V)$ is identified
with a section of $\Hom(\wedge^p V, \wedge ^n V)$.
\begin{lemma}
 $\Delta$ is a differential operator of order two.
\end{lemma}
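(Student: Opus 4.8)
The plan is to invoke the standard algebraic characterization of differential operators of order $\le k$ on the graded-commutative algebra $\mathcal{A}=\Gamma(\wedge^\bullet V)$: such an operator is one for which all iterated graded commutators $[\,[\,\cdots[\Delta,m_{a_0}],\dots],m_{a_k}\,]$ with multiplication operators $m_{a_i}$ vanish. Since $[\Delta,m_{ab}]=[\Delta,m_a]m_b\pm m_a[\Delta,m_b]$ and operators of order $\le k-1$ are stable under composition with multiplications, it suffices to verify vanishing of the triple commutator ($k=2$) when $a_0,a_1,a_2$ run over a set of algebra generators, here $C^\infty(M)$ and $\Gamma(V)$. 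I also record at the outset that $\Delta 1=0$, since $\Delta$ lowers the wedge-degree.

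First I would recast $\Delta$ through Poincar\'e duality. For $V$ of rank $n$, the pairing $\omega\mapsto(\beta\mapsto\omega\wedge\beta)$ is an isomorphism $\Gamma(\wedge^{n-p}V)\xrightarrow{\sim}\Gamma(\wedge^p V^*\otimes\det V)$, under which the defining formula for $\Delta$ becomes exactly the Chevalley--Eilenberg--type differential $d_\nabla\colon\Gamma(\wedge^p V^*\otimes\det V)\to\Gamma(\wedge^{p+1}V^*\otimes\det V)$ of the Lie algebroid with coefficients in the line bundle $\det V$, the $V$-connection $\nabla$ supplying the action of $\Gamma(V)$ on $\det V$. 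This $d_\nabla$ is a degree-one derivation of $(\Gamma(\wedge^\bullet V^*),\wedge)$. Under the same duality, multiplication by $X\in\Gamma(V)$ on $\mathcal{A}$ corresponds, up to sign, to the contraction $\iota_X$ (a degree $-1$ derivation of $\wedge^\bullet V^*$ acting trivially on the $\det V$ factor), while multiplication by $f\in C^\infty(M)$ is unchanged. Thus the generating multiplication operators of $\mathcal{A}$ become the $\iota_X$ and the $m_f$, and measuring the order of $\Delta$ amounts to computing graded commutators of $d_\nabla$ against these.

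The computation is then pure Cartan calculus (the relations in the Remark after the de Rham example, twisted by $\nabla$): $[d_\nabla,\iota_X]=L^\nabla_X$, $[d_\nabla,m_f]=m_{d_\nabla f}$, $[L^\nabla_X,\iota_Y]=\iota_{[X,Y]}$, $[L^\nabla_X,m_f]=m_{\rho(X)f}$, and $[\iota_X,m_\eta]=m_{\langle\eta,X\rangle}$ for $\eta\in\Gamma(V^*)$, together with $[\iota_X,\iota_Y]=0$ and $[\iota_X,m_f]=0$. Carried through two commutators, the double commutator of $d_\nabla$ with any two generating multiplications always lands on an order-zero operator: either a further contraction $\iota_{[X,Y]}$, or multiplication by a function, or zero. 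A third commutator with $\iota_W$ or $m_g$ then vanishes, since two contractions graded-commute, a contraction commutes with multiplication by a function, and the contraction of a function is zero. Hence every triple commutator of $\Delta$ with generating multiplications vanishes, so $\Delta$ has order $\le 2$.

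The main obstacle is not the commutator bookkeeping, which is short, but the two identifications that feed it: verifying that the stated formula for $\Delta$ really is the $\det V$-twisted Lie algebroid differential under Poincar\'e duality, and that the contractions commute with the coefficient twist so that the twisted identity $[L^\nabla_X,\iota_Y]=\iota_{[X,Y]}$ holds unchanged. Once these are in place the conclusion is immediate. Alternatively, one can prove order $\le 2$ by exhibiting $\Delta$ as a generating operator of the Gerstenhaber bracket, i.e. checking on generators that $\Delta(\alpha\wedge\beta)-\Delta\alpha\wedge\beta-(-1)^{\abs{\alpha}}\alpha\wedge\Delta\beta=\pm[\alpha,\beta]$, whereupon the biderivation property of the bracket forces order $\le 2$ by the standard characterization.
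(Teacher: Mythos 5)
Your overall strategy (Grothendieck's iterated-commutator criterion plus reduction to the algebra generators $C^\infty(M)$ and $\Gamma(V)$) is sound, and for the record the paper itself gives no proof of this lemma at all --- it is quoted from Ping Xu's work. But your central computation has a genuine gap. The duality $\Gamma(\wedge^{n-p}V)\cong\Gamma(\Hom(\wedge^p V,\wedge^n V))$, $\omega\mapsto\omega\wedge(-)$, is degree-\emph{reversing}, and consequently it intertwines multiplication by $X\in\Gamma(V)$ with contraction only up to a \emph{degree-dependent} sign: on the component $\Hom(\wedge^p V,\wedge^n V)$ one has $\Phi\, m_X\,\Phi^{-1}=(-1)^{n-p}\iota_X$. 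This is not an overall sign but the parity operator $(-1)^{\deg}$ up to a global factor, and it does not pass through graded commutators. Writing $m'_X:=\Phi m_X\Phi^{-1}$, a direct check gives, on the degree-$p$ component,
\[
[d_\nabla,m'_X]=(-1)^{n-p}\bigl(d_\nabla\iota_X-\iota_X d_\nabla\bigr),
\]
the \emph{ungraded} commutator, whereas Cartan's formula computes the anticommutator $L^\nabla_X=d_\nabla\iota_X+\iota_X d_\nabla$. So the identity $[d_\nabla,\iota_X]=L^\nabla_X$ you feed into the rest of the argument is an identity about the wrong operators: $\iota_X$ is not the operator whose commutators measure the Grothendieck order of $\Delta$. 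Redoing the computation with the correct $m'_X$, the double commutator $[[d_\nabla,m'_X],m'_Y]$ comes out, up to sign, as $\iota_{[X,Y]}+2\bigl(\iota_Y d_\nabla\iota_X-\iota_X d_\nabla\iota_Y\bigr)$, and proving that this is an order-zero operator for the transported module structure is no longer formal Cartan calculus --- it is essentially equivalent to Xu's generating-operator identity, i.e.\ to the content of the lemma. Nor can the defect be removed by rescaling the duality with degree-dependent signs: any rescaling that sends $m_X$ to $\iota_X$ on the nose twists $d_\nabla$ into $(-1)^{\deg}d_\nabla$, and the same discrepancy reappears.

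Your fallback route --- exhibit the deviation of $\Delta$ from being a derivation as the Schouten-type bracket, then use that the bracket is a biderivation --- is in fact the proof implicitly intended by the paper (it is how Xu's theorem, stated right after the lemma, is established). But as you phrase it, it is circular: the deviation $\Delta(\alpha\wedge\beta)-\Delta\alpha\wedge\beta-(-1)^{\abs{\alpha}}\alpha\wedge\Delta\beta$ is not determined by its values on generators unless one already knows it is a biderivation, and that is precisely equivalent to the order-$\le 2$ assertion being proved. The honest version is Xu's: verify the generating identity for \emph{all} $\alpha,\beta$ by induction on wedge degree directly from the defining formula for $\Delta$; order $\le 2$ then follows at once because the bracket is a biderivation. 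Either that computation, or a dual-side computation that carries the parity operator through every commutator, must be supplied; as written, the proof does not close.
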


\begin{theorem}[P.~Xu \cite{PingXu}]
\label{BV-conn}
There is a one-to-one correspondence between $V$-connections on
$E=\wedge^n V$ and linear operators $\Delta$ generating the bracket on
$\Gamma(\wedge^\bullet V)$. Under this correspondence, a flat
$V$-connection induces a square-zero differential operator $\Delta$ of
order two, thus turning $\Gamma(\wedge^\bullet V)$ into a
Batalin-Vilkovisky algebra.
\end{theorem}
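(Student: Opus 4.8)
The plan is to organize everything around the explicit operator $\Delta=\Delta_\nabla$ written above, exploiting two facts: that $\Delta_\nabla$ is literally a Chevalley--Eilenberg coboundary, and that the operators generating a fixed Gerstenhaber bracket form an affine space. First I would establish that $\Delta_\nabla$ generates the bracket, \emph{i.e.}, that for $a\in\Gamma(\wedge^p V)$ and $b\in\Gamma(\wedge^\bullet V)$,
\[
 [a,b]=(-1)^{p}\bigl(\Delta_\nabla(a\wedge b)-(\Delta_\nabla a)\wedge b-(-1)^{p} a\wedge \Delta_\nabla b\bigr).
\]
By the preceding Lemma $\Delta_\nabla$ has order $\le 2$, and it kills $C^\infty(M)$ for degree reasons, so the right-hand side is a biderivation in $(a,b)$, as is the Gerstenhaber bracket on the left. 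It therefore suffices to check the identity when $a,b$ range over the algebra generators $C^\infty(M)\cup\Gamma(V)$. The mixed case $a=X\in\Gamma(V)$, $b=f\in C^\infty(M)$ reproduces $[X,f]=\rho(X)f$ precisely because of the connection axiom $\nabla_X(fs)=(\rho(X)f)s+f\nabla_X s$, while the case $a,b\in\Gamma(V)$ reproduces the Lie algebroid bracket.

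Next I would upgrade this single construction to a bijection by an affine-space argument. A degree $-1$ derivation of $\Gamma(\wedge^\bullet V)$ vanishes on $C^\infty(M)$ and is $C^\infty(M)$-linear on $\Gamma(V)$, hence is exactly an interior product $\iota_\alpha$ by some $\alpha\in\Gamma(V^*)$. On the one hand, $V$-connections on the line bundle $\wedge^n V$ form a torsor over $\Gamma(V^*)$, since two of them differ by a $V$-one-form valued in $\operatorname{End}(\wedge^n V)=\mathbb{R}$. On the other hand, the order-$\le 2$ operators generating a fixed bracket form a torsor over the degree $-1$ derivations, because the difference of two such operators generates the zero bracket and so is a derivation. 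Reading off the defining formula, replacing $\nabla$ by $\nabla+\alpha$ changes $\Delta_\nabla$ by exactly $\iota_\alpha$, so $\nabla\mapsto\Delta_\nabla$ is equivariant over $\mathrm{id}_{\Gamma(V^*)}$; being a map of nonempty torsors over the same abelian group, it is automatically a bijection, which is the claimed one-to-one correspondence.

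Finally I would prove that flatness forces $\Delta_\nabla^2=0$. The point is that, under the top-degree pairing $\wedge^{n-p}V\cong \wedge^{p}V^*\otimes\wedge^n V$ used to identify $\omega$ with a $\Hom(\wedge^p V,\wedge^n V)$-valued form, the displayed formula for $\Delta_\nabla$ is \emph{identical} to the Chevalley--Eilenberg coboundary (1), with the anchor action $\rho(X_i)$ replaced by the connection action $\nabla_{X_i}$ on the coefficient bundle $\wedge^n V$. Thus $\Delta_\nabla$ is the Lie algebroid cohomology differential with coefficients in $\wedge^n V$, the coefficient module structure being supplied by $\nabla$. A direct computation of $\Delta_\nabla^2$ then splits into double-bracket terms, which cancel by the Jacobi identity of the Lie algebroid, and double-connection terms, which assemble into contraction with the curvature $R\in\Gamma(\wedge^2 V^*)$; consequently $\Delta_\nabla^2=\iota_R$, so a flat connection ($R\equiv 0$) yields a square-zero $\Delta_\nabla$. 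Together with the first two paragraphs this exhibits $\Gamma(\wedge^\bullet V)$ as a Batalin--Vilkovisky algebra.

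I expect the main obstacle to be the signed verification in the first step that $\Delta_\nabla$ generates \emph{precisely} the Gerstenhaber bracket, and in particular seeing the anchor emerge from the connection's Leibniz rule. Once this and the identification with the coboundary (1) are in place, the torsor argument is purely formal and the curvature computation is the standard fact that a Chevalley--Eilenberg differential squares to the curvature of its coefficient connection.
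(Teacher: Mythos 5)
The paper itself contains no proof of this theorem: it is quoted from Xu's work \cite{PingXu}, so there is nothing internal to compare your argument against. Judged against the original, your proposal is in substance a correct reconstruction of Xu's argument: (1) recognize the displayed operator $\Delta_\nabla$ as the Lie algebroid (Chevalley--Eilenberg) differential with coefficients in the line bundle $\wedge^n V$, transported through $\wedge^{n-p}V\cong\Hom(\wedge^pV,\wedge^nV)$, and verify on the algebra generators $C^\infty(M)\cup\Gamma(V)$ that it generates the Gerstenhaber bracket (legitimate, since an order-$\le 2$ operator killing $C^\infty(M)$ has a biderivation as its deviation, and two biderivations agreeing on generators agree everywhere); (2) get bijectivity from the torsor structures over $\Gamma(V^*)$ on both sides, using that every degree $-1$ derivation of $\Gamma(\wedge^\bullet V)$ is an interior product $\iota_\alpha$; (3) obtain $\Delta_\nabla^2=\iota_R$ from the standard fact that a coefficient differential squares to the curvature action, so flatness gives the BV property. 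All three steps are the right ones, and the reduction-to-generators and torsor arguments are sound.

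One caveat, which is exactly the sign issue you flag at the end but is sharper than mere bookkeeping: with the naive identification $\omega\mapsto(\eta\mapsto\omega\wedge\eta)$, the change-of-connection computation does \emph{not} give $\iota_\alpha$ on the nose. Since $\omega\wedge X_1\wedge\dots\wedge X_{p+1}=0$ in $\wedge^{n+1}V$, one finds
\[
\Delta_{\nabla+\alpha}\,\omega-\Delta_\nabla\,\omega=(-1)^{\abs{\omega}+1}\iota_\alpha\omega,
\]
and the operator $\omega\mapsto(-1)^{\abs{\omega}+1}\iota_\alpha\omega$ is \emph{not} a graded derivation. That is inconsistent with $\Delta_{\nabla+\alpha}$ and $\Delta_\nabla$ both generating the same bracket, so under the naive identification your step (1) cannot hold for all connections simultaneously; the isomorphism $\wedge^{n-p}V\cong\Hom(\wedge^pV,\wedge^nV)$ must be normalized with degree-dependent signs (as is done in \cite{PingXu} and \cite{Evens-Lu-Weinstein}) before both the generation claim and the equivariance statement $\nabla+\alpha\mapsto\Delta_\nabla+\iota_\alpha$ become literally true. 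This is a reparable convention issue rather than a structural flaw --- indeed, the contradiction just exhibited is a useful consistency check showing that the correct signs are forced --- but as written the equivariance step of your torsor argument would fail, so the normalization needs to be fixed explicitly before the rest of the proof goes through.
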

Note that a flat $V$-connection on $E=\wedge^n V$ always exists.
\begin{examples}
 \noindent
 \newline
 \begin{enumerate}
 \item Given a Lie algebra $V$ of dimension $n$, the line bundle
   $\wedge^n V$ has a (flat) $V$-connection corresponding to the
   adjoint action of $V$ on $\wedge^n V$. The corresponding operator
   $\Delta$ is the homological Chevalley-Eilenberg operator.
\item
  The Brylinski differential $d$ on the homological Poisson complex
  $\Gamma(\wedge^\bullet T^*M) \linebreak[0] = \Omega^\bullet (M)$ of
  an $n$-dimensional Poisson manifold $M$ generates the corresponding
  bracket.  The flat connection on $\Omega^n_M \to M$ associated to
  $d$ by means of Theorem \ref{BV-conn} is given by
  $\nabla_\theta\omega=\theta\wedge di_\pi(\omega), \theta\in
  \Gamma(T^*M), \omega\in \Omega^n(M)$, \cite{Evens-Lu-Weinstein}.
 \end{enumerate}
\end{examples}

\section{Lie bialgebroids}
\label{bi}

Let $V$ be a vector bundle over $M$ such that $V\to M$ is both a Lie
algebroid and a Lie coalgebroid. Denote by $d$ the coboundary operator
on $\Gamma(\wedge^\bullet V) = \Gamma (S(V[-1]))$ induced by the Lie
coalgebroid structure, as in Theorem~\ref{coalgebroid}.

\begin{definition}
\label{def-bi}
A vector bundle $V \to M$ with the structures of a Lie algebroid and a
Lie coalgebroid is a \textit{Lie bialgebroid} if $d$ is a derivation
of bracket:
\begin{equation}
\label{compatibility}
 d([X,Y])=[dX,Y]+[X,dY],\quad X,Y\in \Gamma(V).
\end{equation}
\end{definition}

Note that in the equation above, $dX, dY \in \Gamma (\wedge^2 V)$ and
the Lie bracket is extended from $V$ to $\wedge^\bullet V$ as a
biderivation of degree $-1$, see Section \ref{odd-Poisson}.

\begin{remark}
  In view of the equivalence between Lie coalgebroid structures on
  $V \to M$ and Lie algebroid structures on $V^* \to M$, see Example
  \ref{coalg-ex}\eqref{dual-coalg}, one may think of a Lie bialgebroid
  as a pair $(V,V^*)$ of Lie algebroids satisfying the compatibility
  condition \eqref{compatibility}. This is a more common point of
  view. However, we prefer to think of a Lie bialgebroid as two
  compatible structures on $V$ in this section for functoriality
  reasons. We will return to the former viewpoint later, when we
  discuss the Hamiltonian approach.
\end{remark}

\begin{theorem}
If $V$ is a \textit{Lie bialgebroid}, then so is $V^*$.
\end{theorem}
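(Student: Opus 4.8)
The plan is to exploit the duality between the two structures that constitute a Lie bialgebroid, together with a manifestly self-dual reformulation of the compatibility condition \eqref{compatibility}. First I would equip $V^*$ with both structures, for free. By Example~\ref{coalg-ex}\eqref{dual-coalg}, the Lie algebroid structure on $V$ is equivalent to a Lie coalgebroid structure on $V^*$, while the Lie coalgebroid structure on $V$ is equivalent to a Lie algebroid structure on $V^*$. Hence $V^*$ automatically carries the structures of both a Lie algebroid and a Lie coalgebroid, and only the compatibility remains to be checked. Write $\mu$ for the homological data encoding the Lie algebroid $V$ (equivalently, by Theorem~\ref{Vaintrob-Poisson}, the Gerstenhaber bracket on $\Gamma(\wedge^\bullet V)$) and $\gamma$ for the data encoding the Lie algebroid $V^*$ (equivalently, the coboundary operator $d$ on $\Gamma(\wedge^\bullet V)$ of Definition~\ref{def-bi}).

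The crux is to rephrase \eqref{compatibility} symmetrically. I would introduce the canonical \emph{big bracket} $\{\,,\}$, the graded Poisson bracket of degree $-2$ on the algebra of functions on the graded symplectic manifold $T^*[2]V[1]$, determined by the duality pairing between $V$ and $V^*$; this is precisely the Hamiltonian machinery developed in Section~\ref{bi} and the following section. In these terms $\mu$ and $\gamma$ become functions satisfying $\{\mu,\mu\}=0$ and $\{\gamma,\gamma\}=0$, which are exactly the Jacobi identities for the two Lie algebroids. The coboundary operator is realized as the derived bracket $d=\{\gamma,-\}$, the Chevalley--Eilenberg differential of $V^*$ acting on $\Gamma(\wedge^\bullet V)=\Gamma(\wedge^\bullet (V^*)^*)$, and the Gerstenhaber bracket of $V$ is $[a,b]=\{\{\mu,a\},b\}$. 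The main step is then the derived-bracket identity, an application of the graded Jacobi identity for $\{\,,\}$, showing that condition \eqref{compatibility}, namely that $d$ is a derivation of $[\,,]$, is equivalent to the single equation $\{\mu,\gamma\}=0$.

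Granting this, the theorem is immediate by symmetry. The pair $V^*$ is governed by the same three equations $\{\gamma,\gamma\}=0$, $\{\mu,\mu\}=0$, and $\{\mu,\gamma\}=0$, because the canonical symplectomorphism $T^*[2]V[1]\cong T^*[2]V^*[1]$ exchanging the $V$- and $V^*$-coordinates preserves the big bracket and interchanges the roles of $\mu$ and $\gamma$. Concretely, the coboundary operator $\{\mu,-\}$ on $\Gamma(\wedge^\bullet V^*)$ induced by the Lie coalgebroid structure on $V^*$ is, by the same identity read with $\mu$ and $\gamma$ swapped, a derivation of the Gerstenhaber bracket $\{\{\gamma,-\},-\}$ of the Lie algebroid $V^*$ precisely when $\{\mu,\gamma\}=0$, which we have already established. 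This is exactly the compatibility condition for $V^*$, so $V^*$ is a Lie bialgebroid.

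I expect the main obstacle to be the derived-bracket computation translating \eqref{compatibility} into $\{\mu,\gamma\}=0$: one must track the signs through the graded Jacobi identity and verify that the derivation defect of $d$ over the Schouten bracket $[\,,]$ is controlled precisely by $\{\mu,\gamma\}$ and by nothing else. A direct, coordinate-level verification that $\{\mu,-\}$ is a derivation of $\{\{\gamma,-\},-\}$ is possible but unilluminating, as it obscures the duality; the whole point of routing through the big bracket is that once the single identity is in hand, everything downstream is formal symmetry under $V\leftrightarrow V^*$.
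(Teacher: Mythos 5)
Your argument is correct, but there is nothing in the paper to compare it against line by line: the paper states this theorem without proof, as a classical fact (it goes back to Mackenzie and Xu), and only develops the machinery that makes it transparent one section later. Your route is precisely that machinery, so you have in effect anticipated the paper's own framework rather than diverged from it. Moreover, the step you defer as ``the main obstacle'' --- that, given the two Jacobi identities, the compatibility condition \eqref{compatibility} is equivalent to the single equation $\{\mu,\gamma\}=0$ with $\gamma=L^*\mu_*$ --- does not need to be recomputed: it is exactly the content of Roytenberg's theorem \cite{roytenberg} quoted in the paper's Hamiltonian section, namely that a pair $(V,V^*)$ of Lie algebroids is a Lie bialgebroid iff $\{\mu+L^*\mu_*,\,\mu+L^*\mu_*\}=0$. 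Indeed, by Lemma~\ref{Poisson} the summand $L^*\mu_*$ is quadratic along the fibers of $T^*[2]V[1]\to V[1]$ while $\mu$ is linear, so the expansion
\[
\{\mu+L^*\mu_*,\,\mu+L^*\mu_*\}=\{\mu,\mu\}+2\{\mu,L^*\mu_*\}+\{L^*\mu_*,L^*\mu_*\}
\]
is the decomposition into fiberwise-homogeneous components of degrees one, two and three, whence the single equation splits into exactly the three equations you list. Citing this, your symmetry argument closes the proof cleanly: since $L$ is a symplectomorphism, the vanishing of $\{\mu+L^*\mu_*,\mu+L^*\mu_*\}$ on $T^*[2]V[1]$ is equivalent to the vanishing of $\{(L^{-1})^*\mu+\mu_*,\,(L^{-1})^*\mu+\mu_*\}$ on $T^*[2]V^*[1]$, and by the same theorem --- now with $\mu_*$ as the linear (algebroid) part and $(L^{-1})^*\mu$ as the quadratic (coalgebroid) part --- this says precisely that $V^*$ is a Lie bialgebroid. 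This is also how the paper's Corollary~\ref{Lie-bialg} renders the self-duality manifest: the structure is a single integrable Hamiltonian $\chi$, symmetric under the Legendre exchange of $V$ and $V^*$. What your approach buys is that no sign-sensitive derived-bracket computation is needed at all, only the quoted theorem plus the degree bookkeeping above; what it costs, relative to where the theorem sits in the paper, is that it presupposes the Section 4 formalism and, implicitly, the finite-rank hypothesis already needed for Example~\ref{coalg-ex}(2).
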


\begin{theorem}
\label{Gerst}
Let $V \to M$ be a vector bundle. The structures of
\begin{enumerate}
\item a Lie bialgebroid on $V$,
\item a dg-Gerstenhaber algebra on $\Gamma(\wedge^\bullet V^*)$,
\item a dg-Poisson manifold on $V[1]$ with Poisson bracket of degree
  $-1$
\end{enumerate}
are equivalent.
\end{theorem}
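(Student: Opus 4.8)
The plan is to show that each of the three structures amounts to the \emph{same} underlying data: a Lie algebroid structure on $V$, a Lie coalgebroid structure on $V$, and a single compatibility axiom relating them. Once the three packagings are unwound in this way, the equivalence reduces to matching the compatibility axioms, and for that I would apply the equivalences of Theorems~\ref{Vaintrob}, \ref{coalgebroid}, \ref{Vaintrob-Poisson} and \ref{coald-odd-Poisson} termwise.

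I would first dispose of (2)$\Leftrightarrow$(3), which is merely a change of language. The graded-commutative algebra $\Gamma(\wedge^\bullet V^*)$ is exactly the function algebra $\Ci(V[1])$, and under this identification its wedge product is the product of functions on $V[1]$, while a Gerstenhaber bracket of degree $-1$ is precisely a graded Poisson bracket of degree $-1$. A differential of a dg-Gerstenhaber algebra---a square-zero, degree-one derivation of the product that also derives the bracket---is then the same datum as a homological vector field making $V[1]$ a dg-Poisson manifold. Hence (2) and (3) record identical structures.

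Next I would unwind (2) into its two brackets. Applying Theorem~\ref{Vaintrob-Poisson} to the bundle $V^*$, a Gerstenhaber bracket on $\Gamma(\wedge^\bullet V^*)$ is the same as a Lie algebroid structure on $V^*$, equivalently (Example~\ref{coalg-ex}\eqref{dual-coalg}) a Lie coalgebroid structure on $V$. Independently, the differential is a degree-one, square-zero derivation of $(\Gamma(\wedge^\bullet V^*),\wedge)=\Ci(V[1])$, which by Vaintrob's Theorem~\ref{Vaintrob} is nothing but a Lie algebroid structure on $V$ (the differential being its Chevalley--Eilenberg differential). Thus a dg-Gerstenhaber algebra on $\Gamma(\wedge^\bullet V^*)$ is the same as a Lie algebroid on $V$ together with a Lie coalgebroid on $V$, subject to the one remaining axiom that the differential be a graded derivation of the Gerstenhaber bracket.

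It remains to identify this last axiom with the Lie bialgebroid compatibility \eqref{compatibility}. I would expand the derivation property $d[\alpha,\beta]=[d\alpha,\beta]+(-1)^{\abs{\alpha}}[\alpha,d\beta]$ for the Chevalley--Eilenberg differential $d$ of the Lie algebroid $V$ against the Schouten-type bracket of the Lie algebroid $V^*$ on $\Gamma(\wedge^\bullet V^*)$, and compare it with \eqref{compatibility}. What emerges is exactly the Lie bialgebroid axiom written for the pair $(V^*,V)$ rather than $(V,V^*)$; to close the loop I would then invoke the symmetry of the compatibility condition under $V\leftrightarrow V^*$, that is, the theorem preceding this one that $V$ is a Lie bialgebroid if and only if $V^*$ is. This symmetry---equivalently, that one bracket-derivation condition holds iff its dual does---is the one genuinely non-formal ingredient and is the step I expect to be the main obstacle; everything else is bookkeeping with the cited equivalences. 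One could also sidestep the asymmetry by recognizing both compatibilities as a single vanishing cross-bracket in the odd symplectic dg-manifold picture of the following section, but that is heavier machinery than is needed here.
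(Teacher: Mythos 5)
The paper itself contains no proof of Theorem~\ref{Gerst}: it is quoted as a known result, with the adjacent remark pointing to \cite{PingXu}, where the equivalence appears in the language of \emph{strong} differential Gerstenhaber algebras. So your proposal can only be measured against the standard argument of Xu and Kosmann-Schwarzbach, and in its architecture it reproduces that argument correctly: (2)$\Leftrightarrow$(3) is indeed a pure dictionary ($\Gamma(\wedge^\bullet V^*)=\Ci(V[1])$, Gerstenhaber bracket $=$ degree $-1$ Poisson bracket, dg-structure $=$ homological Poisson vector field); the decomposition of (2) into a Lie algebroid on $V$ (Theorem~\ref{Vaintrob}) plus a Lie algebroid on $V^*$, equivalently a Lie coalgebroid on $V$ (Theorem~\ref{Vaintrob-Poisson} and Example~\ref{coalg-ex}(2)), plus one residual compatibility is right; and invoking the duality theorem stated just before Theorem~\ref{Gerst} to exchange the compatibility written on $\Gamma(\wedge^\bullet V^*)$ for the axiom \eqref{compatibility} written on $\Gamma(\wedge^\bullet V)$ is legitimate relative to the paper, which asserts that duality independently. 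Your closing observation that both compatibilities become one symmetric condition $\{\mu, L^*\mu_*\}=0$ in the Hamiltonian picture is exactly how the paper's later section makes the duality transparent.

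There is, however, a genuine gap in the step you dismiss as bookkeeping. The dg-Gerstenhaber condition in (2) demands that $d$ be a derivation of the bracket on \emph{all} of $\Gamma(\wedge^\bullet V^*)$, whereas the bialgebroid axiom \eqref{compatibility} (transported to $V^*$ by duality) gives it only on pairs of \emph{sections}. Since the algebra is generated in degrees $0$ and $1$, the Leibniz rules reduce the full derivation property to pairs of generators, so you still owe the two mixed cases: writing $[\cdot,\cdot]_*$ for the bracket induced by the Lie algebroid $V^*$ and $\rho_*$ for its anchor, these are $d[\alpha,f]_* = [d\alpha,f]_* + [\alpha,df]_*$ for $\alpha\in\Gamma(V^*)$, $f\in\Ci(M)$, and the degree-zero condition forcing skew-symmetry of the bracket $(f,g)\mapsto \rho_*(df)(g)$ on $\Ci(M)$, i.e., the statement that $\rho\circ\sigma$ is a genuine Poisson bivector on the base, cf.\ Example~\ref{bi-examples}(2). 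Neither is a formal consequence of the Leibniz rules alone: the first is obtained by expanding $d[\alpha, f\beta]_*$ in two ways and cancelling against the section--section case, and the second then follows by a similar expansion of $d[f\alpha, g]_*$. This extension lemma (due to Kosmann-Schwarzbach) is on the same footing as the duality theorem you correctly flag as the crux; without it the implication (1)$\Rightarrow$(2) is not established. A minor point in the same step: a derivation of a degree $-1$ bracket satisfies $d[a,b]=[da,b]+(-1)^{\abs{a}-1}[a,db]$; your sign $(-1)^{\abs{a}}$ would produce a minus on pairs of sections and contradict \eqref{compatibility}.
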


\begin{remark}
  The multiplication on $\Gamma(\wedge^\bullet V^*)$ and the
  differential are to be related via the Leibniz rule. Sometimes, dg
  Gerstenhaber algebras with this property are called \textit{strong},
  \cite{PingXu}.
\end{remark}

\begin{definition}
 A \textit{morphism $V \to W$ of Lie bialgebroids} is a vector bundle
 morphism $(V \to M) \to (W \to N)$ which is a morphism of Lie
 algebroids and a morphism of Lie coalgebroids.
\end{definition}

Taking into account Definition~\ref{mor-coalg}, the above definition
may immediately be reworded as follows.

\begin{proposition}
Let $V$ and $W$ be Lie bialgebrodis. A Lie algebroid morphism $V\to W$
is a morphism of Lie bialgebroids iff it is a Poisson map with respect
to the Poisson structures on $V$ and $W$ induced by the Lie
coalgebroid structures on $V$ and $W$, respectively, as in
Section~\ref{Lie-coalg}.
\end{proposition}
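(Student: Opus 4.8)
The plan is to show that the proposition is a direct unwinding of the definition of a morphism of Lie bialgebroids together with Definition~\ref{mor-coalg}, so the argument will be short and essentially formal. First I would recall that, by definition, a morphism of Lie bialgebroids $V \to W$ is a vector bundle morphism $(V \to M) \to (W \to N)$ that is simultaneously a morphism of Lie algebroids, in the sense of Definition~\ref{Lie-morphism}, and a morphism of Lie coalgebroids. Under the standing hypothesis that $\phi \colon V \to W$ is already a morphism of Lie algebroids, the only remaining condition for $\phi$ to be a morphism of Lie bialgebroids is therefore that it also be a morphism of Lie coalgebroids.

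The second step is to invoke Definition~\ref{mor-coalg}, which characterizes a morphism of Lie coalgebroids precisely as a vector bundle morphism whose induced map of total spaces $V \to W$ is Poisson. Here the relevant Poisson structures are exactly the fiberwise linear ones constructed in Section~\ref{Lie-coalg} out of the Lie coalgebroid data on $V$ and on $W$. Combining the two steps yields the asserted equivalence at once: a Lie algebroid morphism $\phi$ is a morphism of Lie bialgebroids if and only if the associated map of total spaces is Poisson.

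The main point to emphasize, rather than a genuine obstacle, is that the coalgebroid-morphism condition has already been packaged as a Poisson-map condition in Definition~\ref{mor-coalg}; consequently the proposition is simply a restatement of the definition of a bialgebroid morphism in which the Lie algebroid morphism condition is held fixed. The one thing I would take care to verify explicitly is that the Poisson structures appearing in the statement are those induced by the \emph{coalgebroid} data, as in Section~\ref{Lie-coalg}, and not the fiberwise linear Poisson structure on $V^*$ coming from the algebroid side; this ensures that the Lie algebroid morphism hypothesis and the Poisson condition are logically independent pieces of structure, jointly equivalent to the full bialgebroid morphism condition.
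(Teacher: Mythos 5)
Your proposal is correct and matches the paper exactly: the paper gives no separate proof, stating only that the proposition is an ``immediate rewording'' of the definition of a Lie bialgebroid morphism using Definition~\ref{mor-coalg}, which is precisely your two-step unwinding. Your closing remark that the relevant Poisson structures come from the coalgebroid data of Section~\ref{Lie-coalg} (not from the algebroid-induced structure on $V^*$) is a correct and worthwhile clarification, fully consistent with the paper's intent.
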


In the vein of Theorem~\ref{Gerst}, one obtains a characterization of
Lie bialgebroid morphisms:
\begin{theorem}
  There are natural bijections between the following sets:
 \begin{enumerate}
\item The set of Lie bialgebroid morphisms from $V$ to $W$;
\item The set of dg-Gerstenhaber algebra morphisms
  $\Gamma(\wedge^{\bullet} W^*)\to\Gamma(\wedge^{\bullet} V^*)$;
\item The set of dg-Poisson manifold morphisms $V[1] \to W[1]$.
 \end{enumerate}
\end{theorem}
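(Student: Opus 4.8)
The plan is to deduce the three-way bijection by assembling the morphism dictionaries already established for the constituent structures, rather than recomputing anything from brackets. Recall that a Lie bialgebroid morphism $\phi\colon V\to W$ is, by definition, a single vector bundle morphism that is simultaneously a morphism of Lie algebroids (Definition~\ref{Lie-morphism}) and a morphism of Lie coalgebroids (Definition~\ref{mor-coalg}). The first thing I would record is that both of these are conditions on one and the same induced morphism of graded manifolds $\phi[1]\colon V[1]\to W[1]$. A vector bundle morphism $\phi\colon V\to W$ over $f\colon M\to N$ induces $\phi[1]$, whose pullback $\phi[1]^*\colon \Gamma(\wedge^\bullet W^*)\to \Gamma(\wedge^\bullet V^*)$ is a morphism of graded commutative algebras; conversely, a degree count shows that \emph{every} morphism of the graded manifolds $V[1]$, $W[1]$ arises this way, since the degree-zero generators (base functions) must map to base functions, recovering $f$, while the degree-one generators (fiberwise linear functions) must map to degree-one, hence fiberwise linear, functions, recovering a vector bundle map. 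This is exactly the reason the earlier dictionaries produce \emph{strict} morphisms, and it is special to the ungraded case at hand.

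With this in place the bijection between (1) and (3) is immediate. By the morphism statement of Vaintrob's Theorem~\ref{Vaintrob}, $\phi$ is a Lie algebroid morphism precisely when $\phi[1]$ intertwines the homological vector fields $d^V$ and $d^W$, i.e.\ is a dg-manifold morphism. By Theorem~\ref{coald-odd-Poisson}, $\phi$ is a Lie coalgebroid morphism precisely when $\phi[1]$ respects the degree $-1$ Poisson structures. Since both conditions concern the same map $\phi[1]$, the morphism $\phi$ is a Lie bialgebroid morphism exactly when $\phi[1]\colon V[1]\to W[1]$ is at once a dg-morphism and a Poisson morphism, that is, a morphism of dg-Poisson manifolds. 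I would invoke Theorem~\ref{Gerst} at this point to note that the compatibility \eqref{compatibility} is what makes the source and target genuine dg-Poisson manifolds, so that the notion of a dg-Poisson morphism between them is well posed.

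For the bijection between (2) and (3) I would appeal to the duality between (formal) graded manifolds and their function algebras. Passing to functions sends a dg-Poisson morphism $\phi[1]\colon V[1]\to W[1]$ to its pullback $\phi[1]^*\colon \Gamma(\wedge^\bullet W^*)\to\Gamma(\wedge^\bullet V^*)$, which is a morphism of graded commutative algebras (graded manifold morphism), preserves the Gerstenhaber bracket (Poisson condition), and commutes with the differential (dg condition); in other words, it is a dg-Gerstenhaber algebra morphism. Conversely, by the same degree argument as above, any dg-Gerstenhaber algebra morphism $\Gamma(\wedge^\bullet W^*)\to\Gamma(\wedge^\bullet V^*)$ is the pullback along a unique vector bundle morphism $\phi$, and its three defining properties translate back into the dg-Poisson conditions on $\phi[1]$. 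This yields the contravariant bijection between (2) and (3), and composing it with the bijection between (1) and (3) gives the theorem; naturality is inherited from the functoriality of $\phi\mapsto\phi[1]$ and of the function-algebra functor.

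The step requiring the most care, and the only place with content beyond bookkeeping, is confirming that Vaintrob's dictionary and the coalgebroid/odd-Poisson dictionary are induced by the \emph{same} assignment $\phi\mapsto\phi[1]$, so that imposing both the Lie algebroid and the Lie coalgebroid conditions on $\phi$ corresponds to imposing both the dg and the Poisson conditions on $\phi[1]$, with no stray morphisms appearing on either side and no sign clash between the degree $+1$ differential and the degree $-1$ bracket. Once the degree argument pins down all graded manifold morphisms between shifted ordinary vector bundles as fiberwise linear, this compatibility is essentially forced and the remainder is the standard function-algebra duality.
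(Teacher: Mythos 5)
Your proposal is correct and takes exactly the route the paper intends: the paper states this theorem without a separate proof, presenting it as following ``in the vein of'' Theorem~\ref{Gerst}, i.e., by assembling the already-established morphism dictionaries --- Theorem~\ref{Vaintrob} for the Lie algebroid part, Theorem~\ref{coald-odd-Poisson} for the Lie coalgebroid part, and function-algebra duality for the passage between $V[1]\to W[1]$ and $\Gamma(\wedge^\bullet W^*)\to\Gamma(\wedge^\bullet V^*)$ --- which is precisely your argument. Your explicit degree-count observation, that every graded manifold morphism $V[1]\to W[1]$ between shifted ungraded vector bundles is induced by a vector bundle morphism, so that both dictionaries are restrictions of the single correspondence $\phi\mapsto\phi[1]$ and the two conditions can be intersected, is the one detail the paper leaves implicit, and it is the correct justification.
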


\begin{examples}
\label{bi-examples}
 \noindent
 \newline
 \begin{enumerate}
 \item If $M$ is a point, a Lie bialgebroid over $M$ is a Lie
   bialgebra in the sense of Drinfeld.
\item   
  Let $M$ be a Poisson manifold with a Poisson bivector $\pi$,
  $T^*M\to M$ be the cotangent Lie algebroid associated to $\pi$ as in
  Example \ref{basic-examples}\eqref{5} with the canonical Lie
  coalgebroid structure as in Example
  \ref{coalg-ex}\eqref{cotan-coalg}. The Lichnerowicz differential
  $d_\pi=[\pi,-]$ is a derivation of the Schouten-Nijenhuis bracket on
  $\Gamma(\wedge^\bullet TM)$, thus giving the cotangent bundle $T^*M$
  a Lie bialgebroid structure.
  
  Conversely, let $V$ be a Lie bialgebroid over $M$. Then $\pi_V:=\rho
  \circ \sigma: T^*M\to TM$, where $\rho$, $\sigma$ are the anchor and
  the coanchor maps of $V\to M$, respectively, defines a Poisson
  structure on $M$.
\item 
\label{bi-3}
Suppose $V\to M$ is a Lie algebroid with an anchor $\rho$ and a
structure differential $d^V$ on $\Gamma(\wedge^\bullet V^*)$. Let
$r\in\Gamma(\wedge^2 V)$ be such that $[r,r]=0$. Denote by $r^\#$ the
associated bundle map $V^*\to V$.  One can show that
$\rho^*:=\rho\circ r^\#: V^*\to TM$ and
  \[
   [\phi,\psi] = L_{r^\# (\phi)}(\psi) - L_{r^\#(\psi)}(\phi) - d^V
   ((\phi \wedge \psi)r),\quad \phi,\psi\in \Gamma(V^*),
  \]
where
\[
L_X := [d^V, \iota_X], \quad X \in \Gamma(V),
\]
determine a Lie algebroid structure on $V^*\to M$.  Furthermore, a
simple check confirms that the pair $(V,V^*)$ is actually a Lie
bialgebroid. In particular, taking $V=TM$ recovers the previous
example.
  \item
A \textit{Nijenhuis structure} on a smooth manifold $M$ is a vector
bundle endomorphism $N:TM\to TM$ such that its \textit{Nijenhuis
  torsion}
  \[
   [N(X),N(Y)]-N([N(X),Y]+[X,N(Y)])+N^2([X,Y])
  \]
vanishes for any $X,Y\in \Gamma(TM)$.  A prototypical example of a
Nijenhuis structure arises in the form of a \textit{recursion
  operator} of an integrable bi-Hamiltonian system
\cite{magri,olver}. Namely, given a pair $\pi_0,\pi_1$ of Poisson
tensors on a manifold $M$ such that any linear combination $\lambda
\pi_0+\mu \pi_1$ is Poisson as well and $\pi_0$ is symplectic, then
$N:={\pi_1}^\#\circ (\pi_0^\#)^{-1}$ is a Nijenhuis structure on $M$.
  
A Nijenhuis structure on $M$ induces \cite{KS} a Lie algebroid
structure on $TM$ with the bracket
  \[
   [X,Y]_N:=[N(X),Y]+[X,N(Y)]-N([X,Y])
  \]
and $N:TM\to TM$ being an anchor map.  Now, if $M$ is a Poisson
manifold, then $T^*M$ can be given the Lie algebroid structure of
Example \ref{basic-examples}\eqref{5}. This produces a Lie coalgebroid
structure on $TM$ by Example \ref{coalg-ex}\eqref{dual-coalg}. It
turns out \cite{KS} that these two structures on $TM$ are compatible,
thereby making $TM$ a Lie bialgebroid, provided
  \[
   N\circ \pi^\# = \pi^\#\circ N^*,
  \]
and
  \[
   \{\alpha, \beta\}_{N\pi}=\{N^*(\alpha),\beta\}_\pi +
   \{\alpha,N^*(\beta)\}_\pi-N^*(\{ \alpha , \beta\}_\pi)
  \]
for all $\alpha,\beta\in\Gamma(T^*M)$.
  
  \item
\label{groupoid}
Recall that a Lie groupoid is a (small) groupoid
$s,t:\mathcal{G}\rightrightarrows M$ such that its set of objects $M$
and the set morphisms $\mathcal{G}$ are smooth manifolds, and the
source and the target maps $s,t$ along with the composition
$\mathcal{G}\times\mathcal{G}\to\mathcal{G}$, the unit $M\to
\mathcal{G}$ and the inverse map $\mathcal{G}\to\mathcal{G}$ are
smooth.  The source and target maps are also assumed to be
submersions.
  
Given a Lie groupoid $\mathcal{G}\rightrightarrows M$, we define the
associated Lie algebroid $V\to M$ as follows. As a vector bundle, $V=
\left. \ker (ds) \right|_M$, where the restriction is taken along
the unit map $M\to \mathcal{G}, x\mapsto 1_x$. A Lie bracket on
$\Gamma(V)$ is obtained by identifying the sections of $V$ with the
right-invariant vector fields on $\mathcal{G}$ and the anchor map is
$dt:T\mathcal{G}\to TM$ restricted onto $V\subset T\mathcal{G}$.
  
  A \textit{Poisson groupoid} is a Lie groupoid
  $\mathcal{G}\rightrightarrows M$ with a Poisson structure $\pi$ such
  that the graph of the composition
  $m:\mathcal{G}\times\mathcal{G}\to\mathcal{G}$ is a coisotropic
  submanifold of
  $\mathcal{G}\times\mathcal{G}\times\bar{\mathcal{G}}$, where
  $\bar{\mathcal{G}}$ denotes $\mathcal{G}$ with the opposite Poisson
  tensor $-\pi$.  One can show \cite{McK-Xu} that a Poisson structure
  on $\mathcal{G}$ induces a Lie algebroid structure on $V^*$.
  Furthermore, it is compatible with the Lie algebroid structure on
  $V\to M$, giving rise to a Lie bialgebroid over $M$.  This
  generalizes the well-known construction of Lie bialgebras arising as
  infinitesimal counterparts of Poisson-Lie groups.
 \end{enumerate}
\end{examples}

\section{The Hamiltonian approach}

Let $Q$ be a (graded) vector field on a graded manifold $V$.  The
\textit{cotangent} (or \textit{Hamiltonian}) lift
\begin{align*}
\Gamma(V, TV) & \to C^\infty (T^*V),\\
Q & \mapsto  \mu_Q,
\end{align*}
is defined by setting
  \begin{equation}
\label{linear}
   \mu_Q(x,p)=p(Q_x),\quad p\in T_x^*V.
  \end{equation}
In local Darboux coordinates $(x^*_i, x^i)$ on $T^*V$, if $Q = \sum_i
Q^i(x) \del/\del x^i$, then $\mu_Q = \sum_i Q^i(x) x^*_i$, a function
linear along the fibers of $\pi: T^*V \to V$.

\begin{remark}
  This construction has been rediscovered in the case where $M$ is a
  point by C.~Braun and A.~Lazarev \cite{braun-lazarev:unimodular}
  under the name of \emph{doubling}, see also \cite{kravchenko}.
\end{remark}

  \begin{proposition}
Let $Q_1,Q_2,Q$ be vector fields on $V$. Then
  \begin{enumerate}
  \item 
   $\{\mu_{Q_1},\mu_{Q_2}\}=\mu_{[Q_1,Q_2]}$
   \item
   $\pi_* (\{\mu_Q,-\})=Q$.
  \end{enumerate}
  \end{proposition}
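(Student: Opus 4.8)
The plan is to prove (2) first by a direct computation in Darboux coordinates, and then deduce (1) from (2) by a conceptual argument instead of computing the second bracket by hand. The two observations that make this efficient are that the Hamiltonian lift $Q \mapsto \mu_Q$ is a linear isomorphism from vector fields on $V$ onto the functions on $T^*V$ linear along the fibers of $\pi$, and that (2) exhibits its inverse as $\mu \mapsto \pi_*(\{\mu,-\})$.

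For (2), I would work in local Darboux coordinates $(x^i, x^*_i)$ on $T^*V$, in which the canonical Poisson bracket reads $\{f,g\} = \sum_k \bigl(\del f/\del x^k \cdot \del g/\del x^*_k \pm \del f/\del x^*_k \cdot \del g/\del x^k\bigr)$, with signs dictated by the grading. Writing $\mu_Q = \sum_i Q^i(x)\, x^*_i$ as in \eqref{linear}, the base-direction component of the Hamiltonian vector field $\{\mu_Q,-\}$ is $\del\mu_Q/\del x^*_k = Q^k(x)$, which depends only on the $x^i$. Hence $\{\mu_Q,-\}$ is $\pi$-projectable and projects to $\sum_k Q^k(x)\,\del/\del x^k = Q$, which is exactly (2). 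This same computation shows the lift is injective with image the fiberwise-linear functions.

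For (1), I would invoke the identity $X_{\{f,g\}} = [X_f, X_g]$ for Hamiltonian vector fields, which is the graded Jacobi identity for the canonical bracket. A short momentum-degree count shows that the bracket of two fiberwise-linear functions is again fiberwise-linear, so $\{\mu_{Q_1},\mu_{Q_2}\} = \mu_{Q'}$ for a unique vector field $Q'$. Applying $\pi_*$ and using that the Hamiltonian vector fields of fiberwise-linear functions are projectable, one gets $Q' = \pi_*[X_{\mu_{Q_1}}, X_{\mu_{Q_2}}] = [\pi_* X_{\mu_{Q_1}}, \pi_* X_{\mu_{Q_2}}] = [Q_1, Q_2]$ by (2) together with the naturality of the Lie bracket under $\pi$-projectable vector fields. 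Therefore $\{\mu_{Q_1},\mu_{Q_2}\} = \mu_{[Q_1,Q_2]}$.

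The main obstacle is the bookkeeping of Koszul signs in the graded setting: one must fix consistent conventions for the degree shift on $T^*V$, for the pairing $p(Q_x)$ in \eqref{linear}, and for the sign in the canonical bracket, so that the bracket is even and the identity $X_{\{f,g\}} = [X_f,X_g]$ holds on the nose. The conceptual route to (1) is attractive precisely because, once (2) and the Jacobi identity are secured with matching signs, the correct sign in (1) is automatic. Beyond the signs, the only genuine content is the $\pi$-projectability of Hamiltonian vector fields of fiberwise-linear Hamiltonians, which the coordinate computation in (2) already makes manifest.
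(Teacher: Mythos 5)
The paper states this proposition without proof --- it is offered as a standard fact about cotangent lifts, in the spirit of Roytenberg's thesis --- so there is no internal argument to compare yours against; judged on its own merits, your proposal is correct and complete. Part (2) is exactly the expected Darboux computation: with $\mu_Q = \sum_i Q^i(x)\, x^*_i$, the fiber derivatives $\partial \mu_Q/\partial x^*_k = Q^k(x)$ are momentum-independent, so $\{\mu_Q,-\}$ is $\pi$-projectable with projection $Q$. Your deduction of (1) from (2) is also sound, and each ingredient you invoke is valid in the graded setting: the momentum-degree count showing that the bracket of two fiberwise-linear functions is again fiberwise-linear, the injectivity of $Q \mapsto \mu_Q$ onto such functions, the identity $X_{\{f,g\}} = [X_f, X_g]$ (graded Jacobi), and the compatibility of $\pi_*$ with Lie brackets of projectable fields. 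The one alternative worth noting is that (1) also follows from a one-line continuation of your own coordinate computation: bracketing $\sum_i Q_1^i x^*_i$ against $\sum_j Q_2^j x^*_j$ produces a fiberwise-linear function whose coefficients are precisely those of $[Q_1,Q_2]$, which is presumably the proof the authors had in mind. Your conceptual route is slightly longer but has the advantage you identify: once the sign conventions are fixed so that (2) and the graded Jacobi identity hold, the Koszul signs in (1) are forced rather than checked, which matters in this paper's setting where $V$ is a graded manifold and the bracket later carries a degree shift.
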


Note that if $[Q,Q] = 0$, the proposition implies
$\{\mu_{Q_1},\mu_{Q_2}\}= 0$.

The case we are interested in corresponds to the graded manifold
$V[1]$ associated with a vector bundle $V\to M$.  By Theorem
\ref{Vaintrob}, a vector field $Q$ of degree $+1$ and such that
$[Q,Q]=0$ determines a Lie algebroid structure on $V\to M$.  This
leads to the following chain of correspondences:
  \hspace{-0.4in}
  \begin{align*}
    \left\{
   \begin{array}{c}
   \text{Lie algebroid}\\ \text{ structures }\\
   \text{on }V\to M
  \end{array}\right\}
  \leftrightsquigarrow
    \left\{
  \begin{array}{c}
   \text{Homological}\\
   \text{vector fields }Q\\
   \text{of degree +1}\\
   \text{on }V[1]
  \end{array}\right\}
  \leftrightsquigarrow 
   \left\{
  \begin{array}{c}
   \text{``Integrable odd}\\
   \text{Hamiltonians'':}\\
   \{\mu_Q,\mu_Q\}=0
  \end{array}\right\}
 \end{align*} 

\begin{theorem}[D.~Roytenberg \cite{roytenberg}]
  Lie algebroid structures on $V\to M$ are in one-to-one
  correspondence with functions $\mu$ on $T^*V[1]$ which are linear
  along the fibers of $T^*V[1] \to V[1]$, of total degree one in the
  natural $\nz$-grading on $C^\infty(T^*V[1])$ and such that
  $\{\mu,\mu\}=0$.
 \end{theorem}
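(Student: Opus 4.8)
The plan is to realize the asserted correspondence as the composite of two bijections, following the chain of correspondences displayed above. By Theorem~\ref{Vaintrob}, Lie algebroid structures on $V \to M$ are the same as homological vector fields $Q$ on $V[1]$, that is, vector fields of degree $+1$ satisfying $[Q,Q] = 0$. It therefore suffices to show that the cotangent lift $Q \mapsto \mu_Q$ restricts to a bijection between such $Q$ and the fiberwise-linear functions $\mu$ on $T^*V[1]$ of total degree one with $\{\mu,\mu\} = 0$.

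First I would identify vector fields with fiberwise-linear functions. Choosing local coordinates $x^i$ of degree $0$ on the base $M$ and $\xi^a$ of degree $1$ along the fibers of $V[1]$, a general function on $T^*V[1]$ expands in powers of the conjugate momenta $x^*_i$ and $\xi^*_a$; being linear along the fibers of $\pi\colon T^*V[1] \to V[1]$ means being of the form $\sum_i A^i x^*_i + \sum_a B^a \xi^*_a$ with $A^i, B^a \in \hCi(V[1])$. By the local description $\mu_Q = \sum_i Q^i x^*_i + \sum_a Q^a \xi^*_a$, these are precisely the lifts of the vector fields $Q = \sum_i Q^i \partial/\partial x^i + \sum_a Q^a \partial/\partial \xi^a$, so $Q \mapsto \mu_Q$ is a bijection onto the fiberwise-linear functions (its injectivity being also visible from the second part of the Proposition above, $\pi_*(\{\mu_Q,-\}) = Q$, which recovers $Q$ from $\mu_Q$). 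For the natural $\nz$-grading on $C^\infty(T^*V[1])$, in which the canonical symplectic form has degree $0$, the momenta carry the opposite degree to their conjugate coordinates, so $\abs{x^*_i} = 0$ and $\abs{\xi^*_a} = -1$. A short computation then shows that $Q$ has degree $+1$ exactly when $\mu_Q$ has total degree one: the two summands of $\mu_Q$ have degrees $\abs{Q^i}$ and $\abs{Q^a} - 1$, which both equal $1$ precisely when $\abs{Q^i} = 1$ and $\abs{Q^a} = 2$, i.e. when $Q$ is of degree $+1$.

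It remains to match the two quadratic conditions. By the first part of the Proposition above, $\{\mu_Q, \mu_Q\} = \mu_{[Q,Q]}$, and since $Q \mapsto \mu_Q$ is injective, $\{\mu_Q,\mu_Q\} = 0$ holds if and only if $[Q,Q] = 0$. Composing this with Theorem~\ref{Vaintrob} yields the claimed bijection. I expect the only genuinely delicate point to be the degree bookkeeping of the previous paragraph: one must keep track of the shift in $V[1]$ together with the sign reversal of the momenta in the canonical grading on $T^*V[1]$, and in particular note that, because $\mu$ is \emph{odd}, graded antisymmetry does \emph{not} force $\{\mu,\mu\}$ to vanish, so that $\{\mu,\mu\} = 0$ is a genuine constraint encoding the homological condition $[Q,Q] = 2Q^2 = 0$.
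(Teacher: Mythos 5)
Your proposal is correct and follows essentially the same route the paper takes: the paper does not spell out a proof of this cited theorem, but the displayed chain of correspondences together with the preceding Proposition ($\{\mu_{Q_1},\mu_{Q_2}\}=\mu_{[Q_1,Q_2]}$ and $\pi_*(\{\mu_Q,-\})=Q$) is exactly your argument of composing Theorem~\ref{Vaintrob} with the cotangent-lift bijection. Your degree bookkeeping ($\abs{x^*_i}=0$, $\abs{\xi^*_a}=-1$ on the unshifted $T^*V[1]$) matches the paper's conventions, as confirmed by its local coordinate formula for $\mu$ and the later remark that the shift to $T^*[2]V[1]$ raises $\abs{\mu}$ from one to three.
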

 
 In terms of local Darboux coordinates $(x^i, \xi^a, x_i^*, \xi^*_a)$,
 where
 \begin{itemize}
  \item 
   $\{x^i\}$ are coordinates on $U\subset M$,
  \item
  $\{e_a\}$ is a basis of sections of $V$ over $U$,
  \item $\{\xi^a\}$ are the corresponding generators of $\Gamma(U,
    \wedge^\bullet V^*)$,
  \item 
   $\{x^*_i\}$ are coordinates along $T^*U \to U$,
  \item 
   $\{\xi^*_a\}$ are coordinates along $T^*V[1]|_U \to V[1]|_U$,
 \end{itemize}
 the above correspondence takes the following form:
   \begin{align*}
    \left\{
    \begin{array}{c}
    \rho(e_a)=A^{i}_a (x) \frac{\partial}{\partial x^i}\\
    {[}{e_a},{e_b}{]}=C^c_{ab}(x){e_c}
    \end{array}
   \right\}
   &\leftrightsquigarrow
    \left\{
  \begin{array}{c}
   d =\xi^a A^{i}_a(x)\frac{\partial}{\partial 
x^i}+\frac{1}{2}C^c_{ab}(x)\xi^a\xi^b\frac{\partial}{\partial \xi^c}
   \end{array}
   \right\}\\
   &\leftrightsquigarrow
   \left\{
  \begin{array}{c} 
  \mu =\xi^a A^{i}_a(x)x_i^*+\frac{1}{2}C^c_{ab}(x)\xi^a\xi^b\xi_c^*
  \end{array}\right\}
 \end{align*}
\begin{examples}
\noindent
\newline
 \begin{enumerate}
  \item 
  The Hamiltonian of a tangent Lie algebroid $TM\to M$ is 
$\mu=\xi^i x_i^*$.
  \item
  A Hamiltonian $\mu=\xi^a
  A^{i}_ax_i^*+\frac{1}{2}C^c_{ab}\xi^a\xi^b\xi_c^*$ with
  coordinate-independent structure coefficients $C^c_{ab}$ corresponds
  to an action Lie algebroid, \emph{cf}.\ Example
  \ref{basic-examples}\eqref{3}. Namely, in that case, $C^c_{ab}$ are
  the structure constants of the Lie algebra $\mathfrak{g}$ acting on
  $M$ and $A^i_a (x)$ are the coefficients of the anchor map
  $\rho(e_a)=A^i_a\frac{\partial}{\partial x^i}$.
 \item
 For a Poisson manifold $M$ with a Poisson bivector 
$\pi=\pi^{ij}\frac{\partial}{\partial x^i}\wedge \frac{\partial}{\partial 
x^j}$,
 the Hamiltonian of the corresponding Lie algebroid $T^*M\to M$ is
 $$
 \mu= \sum_{i,j} \xi^j \pi^{ij}x_i^*+ \sum_{i,j,k} \frac{1}{2}\frac{\partial}{\partial 
x^k}(\pi^{ij})\xi^i\xi^j\xi_k^*.
 $$
 \end{enumerate}
\end{examples}

We would like to give a Hamiltonian characterization of Lie algebroid
morphisms.  To this end, consider Lie algebroids $V\to M$, $W\to N$.
We let $F: f^*(T^* W[1]) \to T^* W[1]$ be the pullback of
$f: V[1] \to W[1]$ along the projection $T^* W[1]\to W[1]$, and
$\Phi: f^*(T^* W[1])\to T^* V[1]$ be the morphism of graded vector
bundles over $V[1]$ given as the dual of the bundle map
$TV[1] \to f^*(TW[1])$ induced by the differential
$df: TV[1] \to TW[1]$.
\begin{proposition} 
 \label{alg-morphisms}
 Let $\mu\in C^\infty(T^* V[1])$, $\nu\in C^\infty(T^* W[1])$ be the
 Hamiltonians corresponding to the Lie algebroid structures on $V\to
 M$, $W\to N$.  Then Lie algebroid morphisms $V\to W$ are in
 one-to-one correspondence with graded manifold morphisms $f: V[1]\to
 W[1]$ such that
 \[
 F^*(\nu)= \Phi^*(\mu).
 \]
\end{proposition}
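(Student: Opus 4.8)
The plan is to reduce the statement to Theorem~\ref{Vaintrob} together with the defining property~\eqref{linear} of the cotangent lift, so that the Hamiltonian identity $F^*(\nu) = \Phi^*(\mu)$ becomes a coordinate-free restatement of the condition that $f$ intertwine the homological vector fields.

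First I would recall that, by Theorem~\ref{Vaintrob}, a Lie algebroid morphism $V \to W$ is the same datum as a graded manifold morphism $f: V[1] \to W[1]$ that is a dg-morphism, \emph{i.e}.\ whose homological vector fields $d^V$, $d^W$ are $f$-related: $df \circ d^V = d^W \circ f$. By Roytenberg's theorem the Hamiltonians are the cotangent lifts $\mu = \mu_{d^V}$ and $\nu = \mu_{d^W}$. Hence everything comes down to showing that $f$-relatedness of $d^V$ and $d^W$ is equivalent to $F^*(\nu) = \Phi^*(\mu)$; since the underlying graded manifold morphism $f$ is common to both descriptions, this equivalence yields the asserted bijection.

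Next I would compute both sides using~\eqref{linear}. A point of $f^*(T^*W[1])$ over $x \in V[1]$ is a covector $q \in T^*_{f(x)} W[1]$. The map $F$ sends $(x,q)$ to $(f(x), q)$, so $F^*(\nu)(x,q) = \nu(f(x),q) = q(d^W_{f(x)})$. The map $\Phi$, being the dual of the bundle map $TV[1] \to f^*(TW[1])$ induced by $df$, sends $(x,q)$ to the covector $q \circ df_x \in T^*_x V[1]$, whence $\Phi^*(\mu)(x,q) = \mu(x, q \circ df_x) = q(df_x(d^V_x))$. Since these functions are linear along the fibers of $f^*(T^*W[1]) \to V[1]$ and $q$ ranges over the full cotangent space, the identity $F^*(\nu) = \Phi^*(\mu)$ holds iff $d^W_{f(x)} = df_x(d^V_x)$ for every $x$, which is exactly the $f$-relatedness condition.

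The main obstacle is that graded manifolds do not have enough honest points, so the pointwise computation above is only a mnemonic: I would carry out the same manipulation at the level of structure sheaves, writing $\mu$, $\nu$, $F^*$ and $\Phi^*$ in the local Darboux coordinates $(x^i, \xi^a, x^*_i, \xi^*_a)$ introduced above and tracking the Koszul signs that accompany the fiberwise-linear coordinates $\xi^*_a$ and the dualization defining $\Phi$. Getting these signs right, and checking that the identification $(f^*(TW[1]))^* \cong f^*(T^*W[1])$ used to define $\Phi$ is sign-compatible with the cotangent lift, is the only delicate point; the algebra itself is then the verification that the coefficients of $x^*_i$ and of $\xi^*_c$ in $F^*(\nu)$ and $\Phi^*(\mu)$ reproduce, respectively, the anchor compatibility $\rho_W \circ \phi = df \circ \rho_V$ and the bracket-compatibility condition of Definition~\ref{Lie-morphism}.
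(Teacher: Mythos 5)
Your proposal is correct and follows essentially the same route as the paper's own proof: reduce via Theorem~\ref{Vaintrob} to $f$-relatedness of the homological vector fields, then evaluate $F^*(\nu)$ and $\Phi^*(\mu)$ at a point $(v,\zeta)$ of $f^*(T^*W[1])$ using the defining property~\eqref{linear} of the cotangent lift, obtaining $\zeta(Q^W_{f(v)})$ and $\zeta(df(Q^V_v))$ respectively. The paper carries out exactly this pointwise computation without your closing caveat about points of graded manifolds; your extra concern is reasonable but does not alter the argument.
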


\begin{proof}
  We have to show that, given Hamiltonians $\mu, \nu$, the condition
  above is equivalent to the homological vector fields $Q^V=dp
  (\{\mu,-\})$ and $Q^W=dp(\{\nu,-\})$ being $f$-related. That is,
\begin{equation*}
d f (Q^V_v) = Q^W_{f(v)}
\end{equation*}
for any $v\in V[1]$. 

Indeed, for any point $(v,\zeta)$ in $f^*(T^* W[1])$, we have
\begin{align*}
 (F^*(\nu))(v,\zeta) = (\nu\circ F)(v,\zeta)=\nu(f(v),\zeta)=\zeta(Q^W_{f(v)}).
\end{align*}
On the other hand, 
\begin{align*}
 (\Phi^*(\mu))(v,\zeta)=(\mu\circ 
\Phi)(v,\zeta)=\mu(v,f^*(\zeta))=(f^*(\zeta))(Q^V_v)=\zeta(df(Q^V_v)).
\end{align*}
Thus, the equation $F^*(\nu) = \Phi^*(\mu)$ is equivalent to
$\zeta(Q^W_{f(v)}) = \zeta(df(Q^V_v))$ for all $v$ and $\zeta$.
\end{proof}

Given a pair $(V,V^*)$ of Lie algebroids over $M$, we get the
corresponding pair of Hamiltonians
 \[
  \mu\in C^{\infty}(T^* V[1])\text{ and }\mu_* \in C^{\infty}(T^*
  V^*[1])
\]
of degree one in the $\nz$-grading. We can bring them together in
$C^{\infty}(T^* V[1])$ by means of a canonical symplectomorphism
\[
L: T^*[2]V[1] \to T^*[2] V^*[1],
\]
known as the \emph{Legendre transform}. In local
coordinates, it reads
\[
(x, \xi, x^*, \xi^*) \mapsto (x, \xi^*, x^*, \xi)
\]
in our $\nz$-graded setting, see \cite[Section 3.4]{roytenberg} for
the $\nz/2\nz$-graded case.  Note that $V^*[1] = (V[1])^*[2]$ and the
shift along the cotangent directions is needed to make sure that $L$
respects grading. After the shift, the Poisson bracket $\{,\}$
acquires degree $-2$, and both $\mu$ and $\mu_*$ become elements of
degree $3$ in $C^{\infty}(T^*[2] V[1])$ and $C^{\infty}(T^*[2]
V^*[1])$, respectively.

\begin{lemma}
\label{Poisson}
Let $\mu_* \in C^{\infty}(T^*[2] V^*[1])$ be a Hamiltonian
corresponding to a Lie algebroid structure on $V^*$.  Then
  $$L^* \mu_*(v,\zeta)=\pi_{V^*,v}(\zeta,\zeta),\quad (v,\zeta)\in T^*[2]V[1]$$
  where $\pi_{V^*}$ is the graded Poisson tensor induced by the Lie
  algebroid structure on $V^*$ and $\pi_{V^*,v}$ is its value at $v\in V[1]$.
\end{lemma}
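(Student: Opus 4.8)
The plan is to verify the identity in local Darboux coordinates, since both sides are globally well-defined functions on $T^*[2]V[1]$ that are quadratic along the fibers of $T^*[2]V[1] \to V[1]$, and $L$ is a genuine degree-preserving symplectomorphism. First I would record the coordinate shape of $\mu_*$. Applying Roytenberg's theorem to the Lie algebroid $V^* \to M$, with $\{\eta_a\}$ the fiber coordinates on $V^*[1]$ dual to a local frame $\{e^a\}$ of $V^*$ and $(x^*_i, \eta^{*a})$ the conjugate cotangent coordinates, one has
\[
\mu_* = \eta_a \tilde A^{ia}(x)\, x^*_i + \tfrac12 \tilde C^{ab}_c(x)\, \eta_a \eta_b\, \eta^{*c},
\]
where $\tilde A^{ia}$ and $\tilde C^{ab}_c$ are the anchor and structure functions of $V^*$; this $\mu_*$ is linear along the fibers $T^*[2]V^*[1] \to V^*[1]$, \emph{i.e.}\ in $(x^*_i, \eta^{*a})$.

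The second step is to push $\mu_*$ through $L$. The coordinate form $(x,\xi,x^*,\xi^*) \mapsto (x,\xi^*,x^*,\xi)$ of the Legendre transform identifies $\eta_a = \xi^*_a$ and $\eta^{*a} = \xi^a$, so that
\[
L^* \mu_* = \xi^*_a \tilde A^{ia}(x)\, x^*_i + \tfrac12 \tilde C^{ab}_c(x)\, \xi^*_a \xi^*_b\, \xi^c.
\]
The essential mechanism to flag here is that $L$ trades the cotangent fiber coordinate $\eta^{*a}$ of $T^*[2]V^*[1] \to V^*[1]$ for the base coordinate $\xi^a$ of $V[1]$; consequently the fiber-\emph{linear} Hamiltonian $\mu_*$ becomes a fiber-\emph{quadratic} function $L^*\mu_*$ in the fiber coordinates $(x^*_i, \xi^*_a)$ of $T^*[2]V[1] \to V[1]$, matching the expected shape of the right-hand side.

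Finally I would compute the right-hand side intrinsically. By Theorem~\ref{coald-odd-Poisson}, equivalently the $V^*$-case of Theorem~\ref{Vaintrob-Poisson}, the Lie algebroid $V^*$ equips $V[1]$ with the degree $-1$ Poisson bracket determined by $\{\xi^a,\xi^b\} = \tilde C^{ab}_c \xi^c$, $\{\xi^a, x^i\} = \tilde A^{ia}$, and $\{x^i,x^j\}=0$, so that the Poisson bivector reads
\[
\pi_{V^*} = \tilde A^{ia}(x)\, \frac{\del}{\del x^i}\wedge\frac{\del}{\del \xi^a} + \tfrac12 \tilde C^{ab}_c(x)\, \xi^c\, \frac{\del}{\del \xi^a}\wedge\frac{\del}{\del \xi^b}.
\]
Writing a cotangent vector as $\zeta = x^*_i\, dx^i + \xi^*_a\, d\xi^a$ and contracting twice then yields $\pi_{V^*,v}(\zeta,\zeta) = \tilde A^{ia} x^*_i \xi^*_a + \tfrac12 \tilde C^{ab}_c \xi^c \xi^*_a \xi^*_b$, which agrees term by term with $L^*\mu_*$. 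I expect the sole genuine difficulty to be the graded sign bookkeeping: one must check that, because $\xi^*_a$ carries degree $1$, the products $\xi^*_a\xi^*_b$ are graded-\emph{symmetric}, so that the contraction $\pi_{V^*,v}(\zeta,\zeta)$ does not collapse, as it would for an ungraded antisymmetric bivector, and that its pairing with the antisymmetric structure functions $\tilde C^{ab}_c$ reproduces exactly the factor $\tfrac12$ and the signs in $L^*\mu_*$. Tracking these signs consistently through the degree-$2$ shift on the cotangent fibers is the crux; everything else is a direct substitution.
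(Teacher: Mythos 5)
Your proposal is correct and follows essentially the same route as the paper's own proof: both write $\mu_*$ in Roytenberg's coordinate form, push it through the Legendre transform via the coordinate swap $(x,\xi,x^*,\xi^*)\mapsto(x,\xi^*,x^*,\xi)$, and match the result term by term against the expansion of $\pi_{V^*,v}(\zeta,\zeta)$ in the frame $dx^i, d\xi^a$. Your version is in fact slightly more explicit than the paper's (you write out the bivector and flag why the odd degree of $\xi^*_a$ prevents $\pi_{V^*,v}(\zeta,\zeta)$ from vanishing), but the substance is identical.
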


\begin{proof}
 In local coordinates we may write  $(v,\zeta)=(x^i, \xi^a, x_i^*, \xi_a^*)$,
 and $L(v,\zeta)=(x^i, \xi_a^*, x_i^*, \xi^a)$.
 Then
 \[
  (L^* \mu_*)(v,\zeta)=\mu_*(L(v,\zeta))=\xi^*_a A^{i}_a(x)x_i^*+\frac{1}{2}C_c^{ab}(x)\xi^*_a\xi^*_b\xi^c,
 \]
 where $A^i_a(x)$ and $C_c^{ab}(x)$ are the structure functions of $\mu_*$.
 
 On the other hand, 
 \[
 \pi_{V^*,v}(\zeta,\zeta)=\sum_{a,b}\pi_{V^*,v}(d\xi^a,d\xi^b) +
 \sum_{i,j}\pi_{V^*,v}(dx^i,dx^j)+\sum_{a,i}\pi_{V^*,v}(d\xi^a,dx^i),
 \]
 where the first and the last summands contribute
 $\frac{1}{2}C_c^{ab}(x)\xi^*_a\xi^*_b\xi^c$ and $\xi^*_a
 A^{i}_a(x)x_i^*$, respectively, and the second one is identically
 zero.
\end{proof}

  \begin{theorem}[D.~Roytenberg \cite{roytenberg}]
A pair $(V,V^*)$ of Lie algebroids is a Lie bialgebroid if and only if
$$\{\mu+L^*\mu_*,\mu+L^*\mu_*\}=0.$$
  \end{theorem}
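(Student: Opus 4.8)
The plan is to expand the single equation $\{\mu + L^*\mu_*,\, \mu + L^*\mu_*\} = 0$ and match its pieces against the two Lie algebroid conditions and the bialgebroid compatibility~\eqref{compatibility}. First I would record that, by the preceding Roytenberg theorem characterizing Lie algebroid structures by Hamiltonians, the Lie algebroid structures on $V$ and on $V^*$ are equivalent to $\{\mu,\mu\} = 0$ and $\{\mu_*,\mu_*\} = 0$, respectively. Since $L$ is a symplectomorphism, $L^*$ preserves the bracket, so $\{L^*\mu_*, L^*\mu_*\} = L^*\{\mu_*,\mu_*\} = 0$. Both $\mu$ and $L^*\mu_*$ have total degree $3$, and for the degree $-2$ bracket on $T^*[2]V[1]$ two such elements bracket symmetrically; hence the mixed terms coincide and
\[
\{\mu + L^*\mu_*,\, \mu + L^*\mu_*\} = \{\mu,\mu\} + 2\{\mu, L^*\mu_*\} + \{L^*\mu_*, L^*\mu_*\} = 2\{\mu, L^*\mu_*\}.
\]
Once $V$ and $V^*$ are known to be Lie algebroids, the asserted equation therefore collapses to the single cross-term condition $\{\mu, L^*\mu_*\} = 0$, and the whole theorem reduces to identifying this condition with the compatibility~\eqref{compatibility}.

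For that identification I would pass to derived brackets inside the ambient Poisson algebra $C^\infty(T^*[2]V[1])$, which contains $\Gamma(\wedge^\bullet V)$ as the subalgebra of functions polynomial in the degree-one fiber coordinates $\xi^*_a$ dual to $V$. Two structures on $\Gamma(\wedge^\bullet V)$ enter~\eqref{compatibility}: the Gerstenhaber bracket coming from the Lie algebroid $V$ and the coboundary $d$ coming from the Lie coalgebroid structure on $V$ (equivalently, the Chevalley--Eilenberg differential of $V^*$). Using the explicit Hamiltonians together with the coordinate formula of Lemma~\ref{Poisson}, a short check shows that $[P,Q]_V = \{\{\mu, P\}, Q\}$ realizes the Gerstenhaber bracket and that the single bracket $d = \{L^*\mu_*, -\}$ realizes the coboundary on $\Gamma(\wedge^\bullet V)$.

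The heart of the argument is then a twofold application of the graded Jacobi identity to $d[P,Q]_V = \{L^*\mu_*, \{\{\mu, P\}, Q\}\}$. Moving $L^*\mu_*$ past the two inner brackets produces exactly three terms: one in which $L^*\mu_*$ meets $\mu$, yielding $\{\{\{\mu, L^*\mu_*\}, P\}, Q\}$, and two in which $L^*\mu_*$ meets $P$ or $Q$, yielding $[dP, Q]_V$ and $[P, dQ]_V$. In other words, writing $\Psi = \{\mu, L^*\mu_*\}$, the defect of $d$ being a derivation of $[\,,]_V$ equals the derived expression $\{\{\Psi, P\}, Q\}$ up to signs. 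Consequently $\Psi = 0$ forces~\eqref{compatibility}; conversely,~\eqref{compatibility} makes $\{\{\Psi, X\}, Y\}$ vanish for all $X, Y \in \Gamma(V)$, and by nondegeneracy of the canonical symplectic structure—probing with the degree-one generators, and invoking the $V \leftrightarrow V^*$ symmetry of the compatibility to supply the remaining generators—this forces the degree-four Hamiltonian $\Psi$ to vanish.

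The main obstacle I anticipate is concentrated in this last step and is twofold: first, the careful bookkeeping of Koszul signs in the repeated Jacobi identity, so that the two surviving terms really assemble into $[dP,Q]_V + [P,dQ]_V$ with the signs of~\eqref{compatibility}; and second, the nondegeneracy argument for the converse, where one must ensure that vanishing of the double derived bracket on a spanning set of generators genuinely annihilates $\Psi$. If the abstract nondegeneracy argument proves delicate, the fallback is a direct computation of $\{\mu, L^*\mu_*\}$ in the local Darboux coordinates of Lemma~\ref{Poisson}, reading off that its vanishing is precisely the coordinate form of~\eqref{compatibility}.
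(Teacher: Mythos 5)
The paper never proves this theorem: it is quoted from Roytenberg's thesis \cite{roytenberg}, and the paper's own material resumes only with Corollary~\ref{Lie-bialg}. So your proposal can only be measured against the standard argument, and most of it is sound and does match that argument: the symmetry of the degree-$(-2)$ bracket on two degree-three elements, the reduction (under the standing hypothesis that $V$ and $V^*$ are both Lie algebroids) to the single cross-term condition $\Psi := \{\mu, L^*\mu_*\} = 0$, the derived-bracket realizations $[P,Q]_V = \{\{\mu,P\},Q\}$ and $d = \{L^*\mu_*,-\}$ on $\Gamma(\wedge^\bullet V) \subset C^\infty(T^*[2]V[1])$, and the double Jacobi identity identifying the defect of \eqref{compatibility} with $\pm\{\{\Psi,X\},Y\}$. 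In particular, the direction ``$\Psi = 0$ implies \eqref{compatibility}'' is correctly established.

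The genuine gap is in the converse, exactly at the step you flagged, and it cannot be closed by ``nondegeneracy of the canonical symplectic structure'' or by the $V \leftrightarrow V^*$ symmetry. In Darboux coordinates $\Psi$ is quadratic in the momenta and of total degree four, so schematically $\Psi = E^{ij}(x)\,x^*_i x^*_j + F(x)\, x^* \xi^* \xi + G(x)\, \xi^*\xi^*\xi\xi$. A double derived bracket $\{\{\Psi,X\},Y\}$ with $X, Y \in \Gamma(V)$ sees $G$ directly, but it sees $F$ and $E$ only through terms involving first derivatives of the coefficient functions of $X$ and $Y$, and those terms carry an extra factor $\xi^*_a\xi^*_b$; extracting $E = F = 0$ from them therefore requires two independent odd fiber directions, i.e.\ $\mathrm{rank}\, V \ge 2$, and probing with sections of $V^*$ via duality has the same blind spot. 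Indeed, with the paper's Definition~\ref{def-bi} the converse is literally false in rank one: take $M = \nr$ and $V = V^* = \nr \times M$ with both anchors sending the basis section to $\partial/\partial x$; then \eqref{compatibility} holds vacuously because $\wedge^2 V = 0$, while $\mu = \xi x^*$, $L^*\mu_* = \xi^* x^*$, and $\Psi = \pm (x^*)^2 \neq 0$. What is missing from your argument is the substantive lemma that \eqref{compatibility} on sections also forces the ``function cases'' $d[X,f] = [dX,f] + [X,df]$ and skew-symmetry of the induced map $\rho \circ \sigma: T^*M \to TM$ (the would-be Poisson anchor of Example~\ref{bi-examples}(2)) --- these are precisely the statements $F = 0$ and $E = 0$. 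That lemma is proved by a Leibniz bootstrap (apply \eqref{compatibility} to the pair $(X, fY)$, then to $(gX, f)$) and holds only for $\mathrm{rank}\, V \ge 2$. Your coordinate fallback would have surfaced the same issue: $\{\mu, L^*\mu_*\} = 0$ comprises strictly more equations than the coordinate form of \eqref{compatibility}.
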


  \begin{corollary}
\label{Lie-bialg}
    A structure of a Lie bialgebroid on a vector bundle $V \to M$ is
    equivalent to a Hamiltonian $\chi$ on $T^*[2]V[1]$, which is
    linear-quadratic along the fibers of $T^*[2]V[1] \to V[1]$ and
    is of degree three in the natural $\nz$-grading on functions
    on $T^*[2]V[1]$, and such that $\{\chi,\chi\}=0$.
\end{corollary}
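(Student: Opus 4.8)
The plan is to deduce the corollary directly from the preceding theorem of Roytenberg by setting $\chi := \mu + L^*\mu_*$ and showing that the Hamiltonians arising in this way are precisely the degree-three functions that are linear-quadratic along the fibers of $T^*[2]V[1] \to V[1]$, with the single equation $\{\chi,\chi\}=0$ simultaneously encoding that $V$ and $V^*$ are Lie algebroids and that they are compatible. The genuine content beyond the theorem is the characterization of the admissible $\chi$ and the observation that the fiber grading disentangles the three conditions.

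First I would fix local Darboux coordinates $(x^i, \xi^a, x^*_i, \xi^*_a)$ on $T^*[2]V[1]$ and record the weights induced by the shifts: $\deg x^i = 0$, $\deg \xi^a = 1$, $\deg x^*_i = 2$, $\deg \xi^*_a = 1$, so that $\{\,,\}$ has degree $-2$. I would then enumerate the monomials of total degree three that are linear or quadratic in the fiber coordinates $x^*_i, \xi^*_a$. The part linear along the fibers is forced to be $\mu = \xi^a A^i_a(x) x^*_i + \tfrac12 C^c_{ab}(x)\xi^a\xi^b\xi^*_c$, a Lie-algebroid Hamiltonian for $V$, while the part quadratic along the fibers is forced to be $B^{ia}(x) x^*_i\xi^*_a + \tfrac12 C^{ab}_c(x)\xi^c\xi^*_a\xi^*_b$, which by the explicit form of the Legendre transform $L$ and Lemma~\ref{Poisson} is exactly $L^*\mu_*$ for a unique Hamiltonian $\mu_*$ linear along the fibers of $T^*[2]V^*[1]$, i.e.\ a candidate Lie-algebroid structure on $V^*$. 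This establishes the bijection between degree-three, linear-quadratic Hamiltonians $\chi$ and pairs of anchor-bracket data on $V$ and $V^*$.

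Next I would separate $\{\chi,\chi\}=0$ by fiber-degree. Since the canonical bracket pairs each base coordinate with its conjugate fiber coordinate, each of its terms applies one base derivative (preserving fiber-degree) and one fiber derivative (lowering it by one), so $\{\,,\}$ lowers the total fiber-degree by exactly one. Hence $\{\mu,\mu\}$, $\{\mu, L^*\mu_*\}$ and $\{L^*\mu_*, L^*\mu_*\}$ are homogeneous of fiber-degrees $1$, $2$ and $3$, respectively, and the expansion $\{\chi,\chi\} = \{\mu,\mu\} + 2\{\mu, L^*\mu_*\} + \{L^*\mu_*, L^*\mu_*\}$ vanishes if and only if each summand does. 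By Roytenberg's first theorem, $\{\mu,\mu\}=0$ says $V$ is a Lie algebroid, and since $L$ is a symplectomorphism $\{L^*\mu_*, L^*\mu_*\}=0$ is equivalent to $\{\mu_*,\mu_*\}=0$, saying $V^*$ is a Lie algebroid; the cross-term $\{\mu, L^*\mu_*\}=0$ is precisely the compatibility isolated in the preceding theorem. Conversely, a Lie bialgebroid structure produces such a $\chi$ via the same formulas.

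The main obstacle I expect is purely bookkeeping: verifying carefully that the canonical bracket lowers fiber-degree by exactly one, so that the three equations genuinely decouple, and confirming that the fiber-quadratic coefficients $B^{ia}$ and $C^{ab}_c$ really assemble, after the Legendre transform, into a well-defined anchor and skew bracket on $V^*$ rather than arbitrary tensors—that is, that the decomposition $\chi = \mu + L^*\mu_*$ is both coordinate-independent and exhaustive. Once the fiber-grading argument is in place, the corollary is a direct repackaging of Roytenberg's theorem.
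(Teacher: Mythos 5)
Your proposal is correct and takes essentially the approach the paper intends: the corollary is stated as an immediate consequence of Roytenberg's theorem, with the implicit argument being exactly your decomposition $\chi = \mu + L^*\mu_*$ into the fiber-linear and fiber-quadratic parts (using the coordinate form of the Legendre transform and Lemma~\ref{Poisson} to identify the quadratic part with a Lie algebroid structure on $V^*$). Your additional observation that the canonical bracket is homogeneous of fiber-degree $-1$, so that $\{\chi,\chi\}=0$ decouples into $\{\mu,\mu\}=0$, $\{\mu,L^*\mu_*\}=0$ and $\{L^*\mu_*,L^*\mu_*\}=0$, is the correct bookkeeping that the paper leaves unstated.
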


\begin{example}
  For a Lie bialgebroid $(T^*M, TM)$ associated to a Poisson
  manifold $M$ as in Example \ref{bi-examples}(2), the Hamiltonian
  $\chi$ on $T^*[2]T^*M[1]$ is given by
\[
\chi = \sum_i \xi_i^* x_i^* +  \sum_{i,j} \xi^j \pi^{ij}x_i^*+ \sum_{i,j,k} \frac{1}{2}\frac{\partial}{\partial 
x^k}(\pi^{ij})\xi^i\xi^j\xi_k^*.
\]
\end{example}

\begin{theorem}
\label{Lie-bialg-mor}
Lie bialgebroid morphisms $V\to W$ are in one-to-one correspondence
with formal graded manifold morphisms $f: V[1]\to W[1]$ such that
\[
F^*(\psi)= \Phi^*(\chi),
\]
where and $\chi, \psi$ are the Hamiltonians on $T^*[2]V[1]$ and
$T^*[2]W[1]$ corresponding, respectively, to the given Lie bialgebroid
structures and $F, \Phi$ are as in Proposition~$\ref{alg-morphisms}$.
\end{theorem}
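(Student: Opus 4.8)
The plan is to decouple the single identity $F^*(\psi)=\Phi^*(\chi)$ into a Lie-algebroid part and a Lie-coalgebroid part by sorting terms according to their polynomial degree along the cotangent fibers. By Corollary~\ref{Lie-bialg} the Hamiltonian $\chi$ is linear-quadratic along the fibers of $T^*[2]V[1]\to V[1]$, and in fact splits canonically as $\chi=\mu+L^*\mu_*$: here $\mu\in C^\infty(T^*V[1])$ is the fiberwise linear Hamiltonian of the Lie algebroid structure on $V$, while by Lemma~\ref{Poisson} the summand $L^*\mu_*$ is fiberwise quadratic and encodes the degree $-1$ Poisson tensor $\pi_{V^*}$ on $V[1]$ coming from the Lie algebroid structure on $V^*$, equivalently from the Lie coalgebroid structure on $V$. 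The same splitting $\psi=\nu+L^*\nu_*$ holds over $W$.

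First I would verify that $F$ and $\Phi$ from Proposition~\ref{alg-morphisms} are linear along the cotangent fibers: $F$ is the pullback projection $(v,\zeta)\mapsto(f(v),\zeta)$, acting as the identity on fibers, while $\Phi$ is the fiberwise dual $(v,\zeta)\mapsto(v,f^*\zeta)$ of $df$. Hence $F^*$ and $\Phi^*$ preserve the polynomial degree along the fibers, carrying linear functions to linear functions and quadratic functions to quadratic functions with no cross terms. Therefore $F^*(\psi)=\Phi^*(\chi)$ holds if and only if its homogeneous components match separately:
\[
F^*(\nu)=\Phi^*(\mu)\quad\text{and}\quad F^*(L^*\nu_*)=\Phi^*(L^*\mu_*).
\]

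By Proposition~\ref{alg-morphisms}, the linear identity $F^*(\nu)=\Phi^*(\mu)$ is equivalent to $f$ being a morphism of Lie algebroids $V\to W$. For the quadratic identity I would evaluate both sides at a point $(v,\zeta)\in f^*(T^*W[1])$ by means of Lemma~\ref{Poisson}, obtaining $\Phi^*(L^*\mu_*)(v,\zeta)=\pi_{V^*,v}(f^*\zeta,f^*\zeta)$ on one side and $F^*(L^*\nu_*)(v,\zeta)=\pi_{W^*,f(v)}(\zeta,\zeta)$ on the other. Equating these as functions of $\zeta$ is, after polarization, exactly the statement that $f:V[1]\to W[1]$ is a graded Poisson morphism for the degree $-1$ structures $\pi_{V^*}$ and $\pi_{W^*}$, which by Theorem~\ref{coald-odd-Poisson} is equivalent to $f$ being a morphism of Lie coalgebroids. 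Since a Lie bialgebroid morphism is by definition a vector bundle map that is simultaneously a Lie algebroid and a Lie coalgebroid morphism, the two displayed conditions together characterize exactly the Lie bialgebroid morphisms, which yields the asserted bijection.

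The step demanding the most care is this fiberwise decoupling. One must confirm that the linear-quadratic structure guaranteed by Corollary~\ref{Lie-bialg} is genuinely respected by $F^*$ and $\Phi^*$, so that $\chi=\mu+L^*\mu_*$ really is the degree splitting and no mixed terms are produced, and one must check that the Legendre transform $L$ intertwines correctly with the pullbacks so that the quadratic identity reproduces the Poisson-morphism condition with the correct signs in the graded setting. Once this compatibility is in place, the equivalence follows formally by combining Proposition~\ref{alg-morphisms} with Lemma~\ref{Poisson} and Theorem~\ref{coald-odd-Poisson}.
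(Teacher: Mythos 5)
Your proposal is correct and follows essentially the same route as the paper's own proof: both split $\chi=\mu+L^*\mu_*$ and $\psi=\nu+L^*\nu_*$ into fiberwise linear and quadratic parts, match the linear parts via Proposition~\ref{alg-morphisms} (Lie algebroid morphism) and the quadratic parts via Lemma~\ref{Poisson} (graded Poisson, hence Lie coalgebroid, morphism), and conclude by the homogeneity of polynomial functions. Your explicit verification that $F^*$ and $\Phi^*$ preserve fiberwise polynomial degree, and the polarization step, merely spell out details the paper leaves implicit.
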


\begin{proof}
  As in the proof of Proposition~\ref{alg-morphisms}, at any point
  $(v,\zeta)$ in $f^*(T^*[2] W[1])$, the value $(F^*(\psi))(v,\zeta) =
  \psi(f(v),\zeta)$ is given by
\begin{enumerate}
\item evaluating the linear in $\zeta \in T^*_{f(v)}[2] W[1]$ part
  $\nu$ of $\psi = \nu + L^* \varepsilon$ given by the vector field
  $Q^W_{f(v)}$ on $W[1]$ as a linear functional on the cotangent
  bundle $T^*[2]W[1]$, see Equation~\eqref{linear},
\item evaluating the quadratic part $L^* \varepsilon$, which by
  Lemma~\ref{Poisson} is given by the Poisson tensor on $W[1]$
  corresponding to the Lie algebroid structure on $W^*$, and
\item adding the results together.
\end{enumerate}

Likewise, the value $(\Phi^*(\chi)) (v,\zeta) = \chi(v,f^*(\zeta))$ at
$(v,\zeta)$ is the sum of the values at $f^*(\zeta)$ of the linear
part $\mu$ of $\chi = \mu + L^*\mu_*$ given by the vector field
$Q^V_v$ and the quadratic part $L^*\mu_*$ given by the Poisson tensor
on $V[1]$.  Since equality of polynomial functions is equivalent to
equality of their homogeneous parts, the agreement of the functions
$F^*(\psi)$ and $\Phi^*(\chi)$ implies that $f$ is a morphism of Lie
algebroids respecting the graded Poisson structures.
\end{proof}

\section{$L_\infty$-bialgebroids}

Corollary~\ref{Lie-bialg} and Theorem~\ref{Lie-bialg-mor} motivate the
following $L_\infty$ generalizations of the notions of a Lie
bialgebroid and a Lie-bialgebroid morphism.

\begin{definition}
  An $\Li$-\emph{bialgebroid} over a (graded) manifold $M$ is a graded
  vector bundle $V \to M$ along with a degree-three function $\chi$ on
  the pointed formal graded manifold $T^*[2]V[1]$ such that
\begin{itemize}
\item $\{\chi,\chi\}=0$, \emph{i.e}., $\chi$ is an \emph{integrable
  Hamiltonian};
\item 
  $\chi$ vanishes on the zero section $V[1] \subset T^*[2]V[1]$ of the
  vector bundle $T^*[2]V[1] \to V[1]$ as well as on the restriction
  $T^*[2]V[1]|_M$ of this bundle to the zero section $M \subset V[1]$
  of the vector bundle $V[1] \to M$.
\end{itemize}
\end{definition}

Removing the second condition leads to an $L_\infty$ generalization of
the notion of a \emph{quasi-Lie bialgebroid}, \cite{roytenberg}, also
known as a \emph{curved Lie bialgebroid}, \cite{grady-gwilliam}.

\begin{remark}
  A seemingly natural attempt to define the notion of an
  $L_\infty$-bialgebroid in a way similar to Definition \ref{def-bi},
  as a pair of $L_\infty$-algebroids and $\Li$-coalgebroid structures
  on $V$ subject to some compatibility conditions, would be too
  restrictive, as such a structure would fail to comprise higher
  $L_\infty$ operations with multiple inputs and multiple outputs,
  cf.\ Examples \ref{lastone} and \ref{infty-bialgebras}. However,
  defining the notion of an $\Li$-bialgebroid via \emph{Manin
    $\Li$-triples}, as an $\Li$-algebroid structure on $V \oplus V^*$
  under some finite-rank conditions, should be possible, see
  \cite{kravchenko}, where this is done for $\Li$-bialgebras,
  \emph{i.e}., when $M$ is a point.
\end{remark}

\begin{example}
\label{lastone}
This example generalizes triangular Lie bialgebras in the sense of
Drinfeld \cite{Dr2}.

A \textit{generalized} (or \textit{higher}) Poisson structure on a
graded manifold $M$ is a (total) degree-two multivector field $P\in
\Gamma(S(T[-1]M))$ such that $[P,P]=0$, where the bracket is the
standard Schouten bracket.  As shown by H.~Khudaverdian and
Th.~Th. Voronov \cite{ThVor2}, such a structure induces $L_\infty$
brackets on the algebra of smooth functions $C^{\infty}(M)$ and on the
de Rham complex $\Gamma(S(T^*[-1]M))$ of $M$. These higher brackets
are known as the \textit{higher Poisson} and \textit{higher Koszul}
brackets, respectively.  The former generalizes the standard Poisson
bracket construction, while the latter generalizes Example
\ref{basic-examples}\eqref{5}, see also \cite{bruce2010}.

Pursuing these ideas in the direction of Example
\ref{bi-examples}\eqref{bi-3}, we start with a graded manifold $M$ and
a graded Lie algebroid $V\to M$, determined by a Hamiltonian
$\mu$. Let $r \in \Gamma(S(V[-1])) \subset \Ci (V^*[1])$ be a
degree-two element such that $[r,r] = 0$, where the bracket is the
degree-$(-1)$ Poisson bracket on $V^*[1]$ induced by the Lie algebroid
structure on $V$, as described in Section \ref{odd-Poisson}. Then the
following sequence of maps takes place:
  \[
  \alpha: \Ci(V^*[1])\to \Gamma(V^*[1], TV^*[1])\to \Ci(T^*V^*[1]).
  \]
Here, the first mapping associates the Hamiltonian vector field
$[f,-]$ to a function $f$, using the odd Poisson bracket on $V^*[1]$,
and the second one is the cotangent lift.  Each of these morphisms
respects the brackets, thus letting $r$ pass to a degree-one element
$\alpha(r)$ such that $\{\alpha(r),\alpha(r)\}=0$ on
$T^*V^*[1]$. After the degree shift to $T^*[2]V^*[1]$, the element
$\alpha(r)$ acquires degree 3.  Altogether, as a Hamiltonian, the sum
$\mu + L^*(\alpha(r))$ determines an $L_\infty$-bialgebroid structure
on $V$.
  
Compatibility of the coalgebroid component $L^*(\alpha(r))$ with $\mu$
becomes more apparent upon recognizing that
$L^*(\alpha(r))=\{\mu,L^*(r)\}$, where $\{,\}$ is the canonical
bracket on $T^*[2]V[1]$; see \cite{KS3}.  The $L_\infty$-algebroid
part on $V$ is a just a graded Lie algebroid, and there are no higher
mixed operations. A construction giving nontrivial higher brackets,
higher cobrackets and higher mixed operations along these lines in the
case of $M$ being a point can be found in \cite{BaVo2}.
\end{example}

\begin{definition}
\label{semistrict}
A \emph{semistrict $L_\infty$-morphism} $V \to W$ of
$L_\infty$-bialgebroids $V \to M$ and $W \to N$ is a morphism
$f: V[1] \to W[1]$ of pointed formal graded manifolds relating the
Hamiltonians on the shifted cotangent bundles $T^*[2]V[1]$ and
$T^*[2]W[1]$ in the sense of Proposition \ref{alg-morphisms}. The
``pointed'' condition means that $f$ maps the zero section of
$V[1] \to M$ to the zero section of $W[1] \to N$.
\end{definition}

Even though this definition looks like a direct generalization of
Theorem~\ref{Lie-bialg-mor}, which gives a Hamiltonian charaterization
of morphisms of Lie bialgebroids, we have chosen to use the word
\emph{semistrict}, because the morphism $V \to W$ of
$L_\infty$-coalgebroids under this definition is strict, while the
morphism $V \to W$ of $L_\infty$-algebroids may have ``higher,''
\emph{i.e}., $\Li$, components. See also Example
\ref{infty-bialgebras} below.

The definition of a full-fledged $\Li$-morphism beautifully overlaps
with the idea of deformation quantization, whence we need to introduce
a formal quantization parameter $\hbar$, to which it would be
convenient to assign a degree 2 in our graded context. It is common in
deformation quantization to consider functions on the cotangent bundle
polynomial in the momenta, see
\emph{e.g}. \cite{bordemann-neumaier-waldmann}, but the presence of a
formal parameter will allow us to consider formal series in the
momenta.

Let $V$ be a graded vector bundle over a graded manifold $M$ and $\chi
\in \hCi (T^*[2] V[1])$ an integrable Hamiltonian defining an
$\Li$-bialgebroid structure on $V$. We want to define an action of
$\chi$ on functions $g \in \hCi (V[1])$ on $V[1]$ by differential
operators.

The differential $dg$ of $g$ defines a section of the cotangent bundle
$T^* (V[1])$. The differential $\dv \chi$ along the vertical
directions of the vector bundle $T^*[2]V[1] \to V[1]$ is a section
of the relative cotangent bundle $T^*(T^*[2]V[1]/V[1]) \to
T^*[2]V[1]$, whose restriction to the zero section $V[1] \subset
T^*[2]V[1]$ may be identified with the shifted tangent bundle
$T[-2]V[1]$. (Formally speaking, there should be a double dual, but
given our assumption of finite dimensionality of each graded
component, the corresponding tangent bundle will be reflexive.) Using
the natural paring between the tangent and the cotangent bundles of
$V[1]$, we can pair $\left. (\dv \chi)\right|_{V[1]}$ and $dg$,
producing a function on $V[1]$, denoted
\[
\chi(g)_1 := \langle (\dv \chi)|_{V[1]}, dg \rangle.
\]
Note that this pairing will shift the degree by $-2$, given that
$\left. (\dv \chi) \right|_{V[1]}$ is a section of $T[-2] V[1]$.

Extend this pairing to pairings of degree $-2k$ between iterated
differentials $\left. (\dv^k \chi) \right|_{V[1]}$ and $d^k g$, viewed
as sections of the $k$th symmetric powers $S^k(T[-2]V[1])$ and
$S^k T^* V[1]$, respectively, for all $k \ge 1$:
\[
\chi(g)_k := \langle (\dv^k \chi)|_{V[1]}, d^k g \rangle.
\]
Note that because of the vanishing condition $\chi|_{V[1]} = 0$, the
$k=0$ term $\chi(g)_0$ will automatically be zero. Finally, define a
differential operator
\begin{align}
\notag
\Ci (V[1]) &\xrightarrow{\chi} \Ci(V[1]) [ \! [ \hbar] \! ],\\
\label{action}
  g & \mapsto \chi(g) := \sum_{k = 1}^\infty \hbar^{k-1} \chi(g)_k.
\end{align}
Since the degree of $\chi$ as a function on $T^*[2]V[1]$ is three and
$\abs{\hbar} = 2$, the degree of $\chi$ as an operator on functions on
$V[1]$ is one, \emph{i.e}.,
\[
\abs{\chi(g)} = \abs{g} + 1.
\]
Note that the vanishing condition $\chi|_{T^*[2]V[1]|_M} = 0$ on the
Hamiltonian implies that $\chi(g)|_M = 0$, whereas the integrability,
$\{\chi, \chi\} = 0$, yields $\chi(\chi(g)) = 0$ or $\chi \circ \chi =
0$.

This action has a more familiar form in coordinates. Let $q^i$'s
denote coordinates on $V[1]$ and $p_i$'s denote the conjugate momenta,
resulting in Darboux coordinates $(p_i, q^i)$ on $T^*[2]V[1]$. Under
stronger assumptions of finite dimensionality, such as the finiteness
of the total rank of $V$ and the total dimension of $M$, so as the
index $i$ takes finitely many values, we would have
\[
\chi(g) = \sum_{k = 1}^\infty \frac{\hbar^{k-1}}{k!} \sum_{i_1, \dots,
  i_k} \pm \left. \frac{\del^k \chi}{\del p_{i_1} \dots \del p_{i_k}}
\right|_{V[1]} \frac{\del^k g}{\del q^{i_1} \dots \del q^{i_k}},
\]
where $\pm$ is a suitable Koszul sign.

Compare this with the star product of standard type:
\[
\chi * g := \sum_{k = 0}^\infty \frac{\hbar^k}{k!} \sum_{i_1, \dots,
  i_k} \pm \frac{\del^k \chi}{\del p_{i_1} \dots \del p_{i_k}}
\frac{\del^k g}{\del q^{i_1} \dots \del q^{i_k}},
\]
which in our case, when $g$ is a function on the base $V[1]$ of the
cotangent bundle, coincides with the star product of Moyal-Weyl type,
see \cite{bordemann-neumaier-waldmann}. Since $\hbar$ has degree 2,
the above star product is homogeneous.

We will be considering smooth maps
\[
f: V[1] \to S(W[1]),
\]
with which we would like to associate certain linear maps
\[
C^\infty (W[1]) \to C^\infty(V[1]).
\]
A map $f: V[1] \to S(W[1])$ induces a morphism
\[
f^*: C^\infty (S(W[1])) \to C^\infty(V[1])
\]
of algebras of smooth functions.  On the other hand, starting from a
smooth function $g \in C^\infty(W[1])$, we can build a linear (and
thereby, smooth) function on $S(W[1])$ by taking a formal Taylor
series
\[
T(g) := \sum_{k = 0}^\infty \frac{1}{k!} \left. \dv^k g\right|_M,
\qquad \left. \dv^k g \right|_M \in \Gamma (M, S^k(W[1])^*).
\]
This produces a linear map
\[
T: \Ci (W[1]) \to \Gamma (N, S(W[1])^*) \subset C^\infty(S(W[1])).
\]

\begin{definition}
\label{morphism}
An \emph{$L_\infty$-morphism} $V \to W$ of $L_\infty$-bialgebroids
$V \to M$ and $W \to N$ is a morphism
\[
f: V[1] \to S(W[1])
\]
of pointed formal graded manifolds relating the Hamiltonians $\chi_V$
and $\chi_W$ on the shifted cotangent bundles $T^*[2]V[1]$ and
$T^*[2]W[1]$, respectively, as follows:
\[
\chi_V \circ f^* \circ T = f^* \circ T \circ \chi_W,
\]
where the structure Hamiltonians $\chi_V$ and $\chi_W$ are regarded as
operators on functions via the action \eqref{action} and the condition
above is understood as an equation on linear operators
\[
C^\infty(W[1]) \to C^\infty(V[1])[\![\hbar]\!].
\]
\end{definition}

\begin{example}
\label{infty-bialgebras}
If $M$ is a point, $T^*[2]V[1]$ can be identified, as a graded
manifold, with $V^*[1]\oplus V[1]$ equipped with a Poisson bracket of
degree $-2$ known as the \textit{big bracket},
\cite{KS-derived,kravchenko}. The algebra of smooth functions on
$V^*[1]\oplus V[1]$ may be thought of as a completion of $S(V[-1]
\oplus V^*[-1])$, and it encodes various Lie and co-Lie operations on
$V$.  Schematically, the big bracket of two homogeneous tensors $f$
and $g$, interpreted as linear maps between graded symmetric powers of
$V[\pm 1]$, may be depicted as follows:
\begin{center}
\includegraphics[scale=0.8]{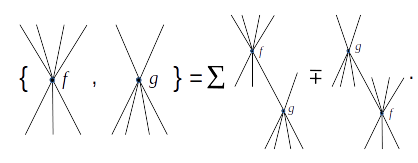}
\end{center}
Here, the summation is done over all possible ways to form 
an input-output pair for $f$ and $g$; the relevant signs are suppressed.

Now, an integrable Hamiltonian on $T^*[2]V[1]$ is a degree-three
function $\chi$ on $V^*[1]\oplus V[1]$ satisfying $\{\chi, \chi\} =
0$.  The condition that $\chi$ vanishes on $V[1]$ and $V^*[1]$ implies
that $\chi$ belongs to a functional completion of $S^{>0}(V[-1])
\otimes S^{>0}(V^*[-1])$, thereby resulting in a notion of an
$L_\infty$-bialgebra structure on $V$, equivalent, up to completion,
to Kravchenko's notion \cite{kravchenko} of an
$L_\infty$-bialgebra. We will adopt the algebraic version of
\cite{BaVo2} and assume $\chi \in \prod_{m,n \ge 1} \Hom (S^m (V[1]),
S^n (V[-1]))$ of degree three and satisfying the Maurer-Cartan
equation $\{\chi, \chi\} = 0$.

The notion of an $L_\infty$-morphism for two $L_\infty$-bialgebroids,
see Definition~\ref{morphism}, leads to the following version of the
notion of an $L_\infty$-morphism of two $L_\infty$-bialgebras $V$ and
$W$, regarded as $L_\infty$-algebroids over a point.  An
$L_\infty$-\emph{morphism $V \to W$ of $L_\infty$-bialgebras} is a
linear map $\Phi: S(V[1]) \to S(W[1])$ commuting with the solutions
$\chi_V \in \prod_{m,n \ge 1} \Hom (S^m (V[1]), S^n (V[-1]))$ and
$\chi_W \in \prod_{m,n \ge 1} \Hom (S^m (W[1]), S^n (W[-1]))$ of the
Maurer-Cartan equation $\{\chi, \chi\} = 0$ defining the
$L_\infty$-bialgebra structures:
\[
\Phi \circ \chi_V = \chi_W \circ \Phi.
\]
Here, as in the definition of an $\Li$-morphism of $\Li$-algebroids,
$\chi_V$ and $\chi_W$ are understood as operators. For example,
$\chi_V \in \prod_{m,n \ge 1} \hbar^{n-1} \Hom (S^m (V[1]), S^n
(V[-1])) \cong \prod_{m,n \ge 1} \Hom (S^m (V[1]), S^n (V[1]))[-2]$.

C.~Bai, Y.~Sheng, and C.~Zhu \cite{bai-sheng-zhu} have considered an
interesting truncated version of $\Li$-bialgebras, namely 2-\emph{Lie
  bialgebras}, as well as morphisms of them. Since their notion of a
2-Lie bialgebra is not a straighforward truncation of the notion of an
$\Li$-bialgebra, it is not clear how our theory compares to
theirs. This could be an exciting topic of further study, especially
taking into account that Bai, Sheng, and Zhu present several examples
of 2-Lie bialgebras.
\end{example}

\subsection*{Acknowledgments}
The authors are grateful to Jim Stasheff for numerous useful remarks
and encouragement and to Theodore Voronov for stimulating discussions
and for drawing our attention to the works \cite{ThV4,ThV3}, where
similar ideas are developed.  The work of the second author was
supported by World Premier International Research Center Initiative
(WPI), MEXT, Japan, and a Collaboration grant from the Simons
Foundation (\#282349).

\bibliographystyle{amsalpha}
\bibliography{bi-lie}

\end{document}